\newcommand{\N}{\mathbb{N}}
\newcommand{\R}{\mathbb{R}}
\newcommand{\sph}{\mathbb{S}}
\newcommand{\eps}{\varepsilon}
\def\XXint#1#2#3{{\setbox0=\hbox{$#1{#2#3}{\int}$ }
\vcenter{\hbox{$#2#3$ }}\kern-.6\wd0}}
\newtheorem{proposition}{Proposition}[section]
\newtheorem{theorem}[proposition]{Theorem}
\newtheorem{corollary}[proposition]{Corollary}
\newtheorem{lemma}[proposition]{Lemma}
\theoremstyle{definition}
\newtheorem{remark}[proposition]{Remark}
\numberwithin{equation}{section}
\newcommand{\beq}{\begin{equation}}
\newcommand{\eeq}{\end{equation}}
\newcommand{\ben}{\begin{enumerate}}
\newcommand{\een}{\end{enumerate}}
\newcommand{\bit}{\begin{itemize}}
\newcommand{\eit}{\end{itemize}}
\newcommand{\vol}{{\mathrm{vol}}}
\newcommand{\dvol}{\,d\vol_{M^{k}_y}}
\newcommand{\thresh}[1]{\rho^*_{\mathrm{#1}}}
\newcommand{\Beta}{\mathbf{B}}
\title{Energy local minimizers for the\\ 
nonlinear Schrödinger equation\\ 
on product spaces}
\author{Dario Pierotti, Gianmaria Verzini and Junwei Yu}
\date{\today}
\begin{document}
\maketitle

\begin{abstract}
We investigate the existence of local minimizers with prescribed $L^{2}$-norm for the energy functional 
associated to the mass-supercritical nonlinear Schrödinger equation on the product space 
$\R^{N} \times M^{k}$, where $(M^{k},g)$ is a compact Riemannian manifold, thus complementing 
the study of the mass-subcritical case performed by Terracini, Tzvetkov and Visciglia in 
[\emph{Analysis \& PDE} 7(1):73–96, 2014]. 

First we prove that, for small 
$L^{2}$-mass, the problem admits local minimizers. Next, we show that when the $L^{2}$-norm is 
sufficiently small, the local minimizers are constants along $M^k$, and they coincide with those of the 
corresponding problem on $\R^{N}$. Finally, under certain conditions, we show that the local minimizers 
obtained above are nontrivial along $M^{k}$. The latter situation occurs, for instance, for every $M^{k}$ of 
dimension $k\ge 2$, with the choice of an appropriate metric $\hat g$, and in $\R\times \sph^k$, $k\ge3$, where $\sph^k$ is endowed with the standard round metric.
\end{abstract}
\noindent
{\footnotesize \textbf{AMS-Subject Classification}}.
{\footnotesize 58J05, 35Q55; 58J70.}\\
{\footnotesize \textbf{Keywords}}.
{\footnotesize Normalized solutions, elliptic PDEs on manifolds, constrained critical points, waveguide manifolds.}

\section{Introduction}

Let $(M^k,g)$ be a smooth compact Riemannian manifold (without boundary), with $\dim M^k=k\ge1$, and let $N\ge1$ be an 
integer. Throughout the paper, we assume without loss of generality (up to a scale factor) that 
\begin{equation}\label{eq:vol}
\vol(M^{k})=1.
\end{equation}

A decade ago, in the paper \cite{MR3219500}, Terracini, Tzvetkov and Visciglia considered the nonlinear 
Schrödinger equation posed on the product space $\R^{N}\times M^{k}$ (endowed with 
the product metric induced by the Euclidean one on $\R^N$, and $g$):
\begin{equation}\label{eq:evoNLS}
\begin{cases}
i\partial_t u - \Delta_{x,y} u - u|u|^{{\alpha}}=0, \ \ \ (t,x,y)\in \R\times \R_{x}^{N} \times M_{y}^{k},\\
u(x,y,t)=u_0(x,y),
\end{cases}
\end{equation}
where
\[
\Delta_{x,y}=\Delta_{x}+\Delta_{y}=\sum_{i=1}^N \partial_{x_ix_i} + \Delta_{y},
\]
and $\Delta_{y}$ is the Laplace-Beltrami operator on $(M^k,g)$. In particular, they were interested in studying 
the nature of the ground states of  \eqref{eq:evoNLS} in the mass subcritical regime
\[
0 < \alpha < \frac{4}{N+k}.
\]
As they show, in such case the energy functional $E: H^{1}(\R^{N}\times M^{k},\mathbb{C}) \to \R$, defined by
\begin{equation}\label{fun:main_xy}
E(u)= \int_{M_{y}^{k}}\int_{\R_{x}^{N}}\left(\frac{1}{2}|\nabla_{x,y}u|^{2}-\frac{1}{2+\alpha}|u|^{2+\alpha}\right)\,dx  \dvol,
\end{equation}
is bounded below on the $L^{2}$-mass constraint
\begin{equation}\label{manifold:rho}
S_{\rho}= \left\{ u \in H^{1}(\R^{N}\times M^{k}): \int_{M_{y}^{k}}\int_{\R^{N}_{x}} |u|^{2}\,dx  \dvol= \rho^{2}\right\},
\end{equation}
so one can try to construct orbitally stable solitary waves by solving, for some $\rho$, the (global) minimization problem
\begin{equation}\label{eq:GS_prod_subc}
K_\rho:=\inf\{E(u): u \in S_\rho\}.
\end{equation}
This eventually leads to a solution $u=u(x,y)\in H^{1}(\R^{N}\times M^{k})$, with suitable Lagrange multiplier 
$\omega\in\R$, of the normalized problem
\begin{equation}\label{eq:main}
\begin{cases}
-\Delta_{x} u-\Delta_{y} u + \omega u= u|u|^{{\alpha}} \ \ \ \text{in } \R_{x}^{N} \times M_{y}^{k}\\
\int_{M_{y}^{k}}\int_{\R_{x}^{N}} |u|^{2}\,dx  \dvol= \rho^{2}.
\end{cases}
\end{equation}

A special class of solutions of \eqref{eq:main} are those for which 
\[
\nabla_y u\equiv0\qquad\text{ in }\R^{N}\times M^{k},
\]
i.e. such that $u(x,y)=z(x)$, for some $z\in H^{1}(\R^{N})$. Conversely, functions in $H^{1}(\R^{N})$ can be extended to the 
product space taking them constant in $y$. In this way, thanks to the normalization choice \eqref{eq:vol}, the 
embedding $H^{1}(\R^{N})\hookrightarrow H^{1}(\R^{N} \times M^{k})$ is actually an isometry, and in the sequel we 
will identify the former space as a subspace of the latter one without further notice. The problem on 
$\R^N$ is much well understood: introducing the corresponding energy functional 
$E_0=\left.E\right|_{H^{1}(\R^{N})}$,
\begin{equation}\label{fun:main_x}
E_{0}(z)= \int_{\R_{x}^{N}}\left(\frac{1}{2}|\nabla_{x}z|^{2}-\frac{1}{2+\alpha}|z|^{2+\alpha}\right)\,dx ,
\end{equation}
one is led to consider the minimization problem
\begin{equation}\label{eq:GS_RN_subc}
I_\rho:=\inf\{E_0(z): z \in S_\rho\cap H^1(\R^N)\},
\end{equation}
which is uniquely achieved (up to translations and phase shifts), for every $\rho>0$, even for nonlinearities 
with larger exponent (up to $\alpha<\frac{4}{N}$); see Section \ref{sec:prelim} ahead for more details on the 
solutions of problem \eqref{eq:GS_RN_subc}.

One of the main questions addressed by the authors in \cite{MR3219500}, after establishing 
that \eqref{eq:GS_prod_subc} is achieved for any $\rho>0$, is whether the minimizers of \eqref{eq:GS_prod_subc} 
coincide with those of \eqref{eq:GS_RN_subc}, or if they have a nontrivial dependence on the variable $y$. 
We summarize in the next statement the part of their results that is relevant for our discussion here.
\begin{theorem}[{\cite[Thms. 1.1, 1.3]{MR3219500}}]\label{thm:TTV_main}
Let $(M^k,g)$ be a compact Riemannian manifold of dimension $k\ge1$, $N\ge1$ and
\[
0<\alpha<\frac{4}{N+k}.
\]
Then, for $K_\rho$, $I_\rho$ defined as in \eqref{eq:GS_prod_subc}, \eqref{eq:GS_RN_subc}:
\begin{enumerate}
\item\label{thm:TTV_main_1} For every $ \rho>0$, $K_\rho$ is finite, and achieved by a family of solutions to 
\eqref{eq:main}.
\item\label{thm:TTV_main_2} There exists $\rho^*\in(0,\infty)$ such that, if $\rho<\rho^*$, then every minimizer $u$ associated to  
\eqref{eq:GS_prod_subc} satisfies $\nabla u_{y}=0$; in particular, $K_\rho=I_\rho$ and the families of minimizers of 
\eqref{eq:GS_prod_subc} and \eqref{eq:GS_RN_subc} coincide.
\item\label{thm:TTV_main_3} Conversely, if $\rho>\rho^*$, then every minimizer in  
\eqref{eq:GS_prod_subc} is nontrivial in $y$ and $K_\rho<I_\rho$.
\end{enumerate}
\end{theorem}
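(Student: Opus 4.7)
\textbf{Item \ref{thm:TTV_main_1} (existence).} The Gagliardo--Nirenberg--Sobolev inequality transfers from $\R^{N+k}$ to $\R^N\times M^k$ via a covering argument using the compactness of $M^k$: for $u\in H^1(\R^N\times M^k)$,
\[
\|u\|_{L^{2+\alpha}}^{2+\alpha}\le C\,\|u\|_{L^2}^{(2+\alpha)(1-\theta)}\,\|\nabla u\|_{L^2}^{(2+\alpha)\theta},
\]
where $\theta\in(0,1)$ is chosen so that $(2+\alpha)\theta<2$ iff $\alpha<4/(N+k)$. This yields coercivity of $E$ on $S_\rho$, hence $K_\rho>-\infty$; scaling a $y$-independent bump gives $K_\rho<0$. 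For a minimizing sequence $(u_n)\subset S_\rho$, bounded in $H^1$, the only source of non-compactness is translation in $\R^N$ (since $M^k$ is compact). I would apply Lions' concentration--compactness: vanishing is ruled out by $K_\rho<0$, and the strict subadditivity $K_\rho<K_{\rho_1}+K_{\rho_2}$ for $\rho^2=\rho_1^2+\rho_2^2$ (a consequence of $K_\rho<0$ and $L^2$-preserving dilations) rules out dichotomy.

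\textbf{Item \ref{thm:TTV_main_2} (small mass).} The $L^2$-preserving scaling $z\mapsto \lambda^{N/2}z(\lambda\,\cdot\,)$ on $\R^N$ gives the exact law $I_\rho=-c_0\,\rho^{p_0}$ with $p_0:=2+\tfrac{4\alpha}{4-N\alpha}>2$, so $|I_\rho|/\rho^2\to 0$ as $\rho\to 0^+$. Given any minimizer $u\in S_\rho$, I decompose $u=\bar u+u^\perp$ with $\bar u(x):=\int_{M^k}u(x,y)\dvol$ the fiberwise average. The Poincar\'e inequality on $(M^k,g)$ yields $\|u^\perp\|_{L^2}^2\le \lambda_1^{-1}\|\nabla_y u\|_{L^2}^2$, where $\lambda_1>0$ is the first nontrivial eigenvalue of $-\Delta_y$. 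Renormalizing, $\tilde u:=(\rho/\|\bar u\|_{L^2(\R^N)})\,\bar u\in S_\rho\cap H^1(\R^N)$, so $E(\tilde u)\ge I_\rho$. The key step is to prove
\[
E(u)\ge E(\tilde u)+c\,\|u^\perp\|_{L^2}^2\qquad\text{for }\rho\text{ small,}
\]
which, combined with $E(u)=K_\rho\le I_\rho\le E(\tilde u)$, forces $u^\perp\equiv 0$; hence $u\in H^1(\R^N)$ and coincides with a minimizer of $I_\rho$.

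\textbf{Item \ref{thm:TTV_main_3} (large mass and threshold).} Since $\alpha<4/(N+k)$ is also subcritical in dimension $N+k$, the ground state on $\R^{N+k}$ exists and scaling gives the analogous infimum $-c_1\,\rho^{p_1}$ with $p_1:=2+\tfrac{4\alpha}{4-(N+k)\alpha}>p_0$. Using a normal chart $\exp_{y_0}\colon B_r(0)\subset T_{y_0}M^k\to M^k$ and a smooth cutoff, I would plant a suitably rescaled copy of the $\R^{N+k}$ ground state inside $\R^N\times B_r(y_0)$; a direct computation yields a competitor $v_\mu\in S_\rho$ with $E(v_\mu)=-c_1'\,\rho^{p_1}(1+o(1))$. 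Since $p_1>p_0$, for $\rho$ large $K_\rho\le E(v_\mu)<I_\rho$. I then define $\rho^*:=\sup\{\rho>0:K_\rho=I_\rho\}$; the above shows $0<\rho^*<\infty$. For $\rho>\rho^*$, $K_\rho<I_\rho$ by definition, and any minimizer of $K_\rho$ must depend nontrivially on $y$, for otherwise $u\in H^1(\R^N)$ would give $E(u)=E_0(u)\ge I_\rho>K_\rho$, a contradiction.

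\textbf{Main obstacle.} I expect the hardest step to be the quantitative inequality in Item \ref{thm:TTV_main_2}: the nonlinear term $\|u\|_{L^{2+\alpha}}^{2+\alpha}$ does not split cleanly under $u=\bar u+u^\perp$. My plan is to Taylor-expand $|\bar u+u^\perp|^{2+\alpha}$ around $\bar u$: the linear term in $u^\perp$ integrates to zero on $M^k$ by the zero-mean property, and the quadratic remainder is controlled by $O(\|\bar u\|_{L^\infty}^\alpha\|u^\perp\|_{L^2}^2)$. To make the Poincar\'e gain $\tfrac12\lambda_1\|u^\perp\|_{L^2}^2$ dominate for small $\rho$, I need an a priori bound $\|\bar u\|_{L^\infty}=o(1)$ as $\rho\to 0$, obtained by bootstrapping the Euler--Lagrange equation from the scaling of $I_\rho$. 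A secondary subtlety is verifying that $\{\rho:K_\rho=I_\rho\}$ is genuinely downward-closed, so that $\rho^*$ is a sharp threshold rather than the sup of a more irregular set.
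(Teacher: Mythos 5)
The statement you set out to prove is quoted in this paper as Theorem~\ref{thm:TTV_main} from Terracini--Tzvetkov--Visciglia and is not re-proved here; the paper instead adapts that strategy to the supercritical constrained problem in Sections~\ref{sec:proof:thm1_1}--\ref{sec:proof:thm1_3}. Measured against that strategy, your plan is broadly sound for Item~\ref{thm:TTV_main_1} (product Gagliardo--Nirenberg, coercivity, strict sub-additivity against dichotomy is the standard route) and for Item~\ref{thm:TTV_main_3} your normal-chart competitor built from the $\R^{N+k}$ ground state is a legitimate alternative to the second-variation test $E''(Z_\rho)[\varphi_1 Z_\rho,\varphi_1 Z_\rho]<0$ used in the paper's analogous step (Proposition~\ref{prop:rhotrfrombelow}); yours gives the correct $\rho\to\infty$ asymptotics, the other gives an explicit upper bound on the threshold.

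Item~\ref{thm:TTV_main_2} is where you genuinely diverge. The TTV/paper route rescales via \eqref{eq:1torhowithlambda} to $E_\lambda$ on the fixed constraint $S_1$, proves $\|\nabla_y u_{\lambda_j}\|_2\to0$ and then $\lambda_j\|\nabla_y u_{\lambda_j}\|_2^2\to0$ (Lemmas~\ref{lem:mlambdafrombelow} and~\ref{lem:ourlemma3.2}), deduces strong $H^1$ convergence $u_{\lambda_j}\to Z_1$, and only then runs a local nondegeneracy argument near $Z_1$ to conclude $\nabla_y u_{\lambda_j}\equiv0$. Your one-shot decomposition $u=\bar u+u^\perp$ with Poincar\'e and a Taylor stability estimate can reach the same conclusion, but you should recognize that the $L^\infty$-smallness of $\bar u$ which you single out as the main obstacle is essentially equivalent to the strong-convergence work just described: to know $\bar u$ is uniformly small you in effect need to know the minimizer is close to $Z_{\|\bar u\|_2}$, which is what the compactness steps deliver, so your ``shortcut'' reroutes rather than avoids that work. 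Moreover your Taylor remainder contains, besides the flagged $O(\|\bar u\|_\infty^\alpha\|u^\perp\|_2^2)$, terms such as $\|u^\perp\|_{2+\alpha}^{2+\alpha}$ that require their own smallness before the Poincar\'e gain $\tfrac12\mu_1\|u^\perp\|_2^2$ can dominate.

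The one genuine gap is the point you flag at the end and then leave open: showing that $\{\rho>0:K_\rho=I_\rho\}$ is downward-closed, so that $\rho^*$ is a single threshold rather than the supremum of a possibly irregular set. This is not a ``secondary subtlety'': without it, Items~\ref{thm:TTV_main_2} and~\ref{thm:TTV_main_3} do not fit together. The paper proves precisely this monotonicity (Lemma~\ref{lem:topointer}) via the elementary pointwise inequality $\lambda_1<\lambda_2\Rightarrow E_{\lambda_1}(u)\le E_{\lambda_2}(u)$ with equality only for $y$-independent $u$; in the subcritical setting, without a $t^*$-constraint to track, the same argument is shorter but must still be made explicit, and you should add it.
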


It is worth mentioning that \cite{MR3219500} also provides the conditional orbital stability of the family of ground 
states described above, under the condition of global well-posedness for the Cauchy problem \eqref{eq:evoNLS} for 
every initial datum, as well as it discusses such well-posedness, for some specific choice of $k$ and $N$. 

Moreover, the authors suggest that the family achieving $I_\rho$ should be stable under \eqref{eq:evoNLS} 
also for $\frac{4}{N+k}\le \alpha < \frac{4}{N}$ and $\rho$ small enough; on the other hand, in the mass supercritical range, $K_\rho=-\infty$, thus they leave as an open question to 
obtain a variational description of such stable family (as they did for the case $\alpha < \frac{4}{N+k}$, 
in terms of $K_\rho$).

The main purpose of our contribution is to complement the analysis performed in  \cite{MR3219500}, in two main 
directions. First we answer the open question mentioned above, providing a characterization of the family achieving 
$I_\rho$, for $\rho$ small, as local minimizers of the energy $E$ also in the mass critical and supercritical 
cases (although still Sobolev subcritical). Secondly, we show that in some cases such local minimization problem 
undergoes a phenomenon similar to the one depicted in Theorem \ref{thm:TTV_main}, parts 
\ref{thm:TTV_main_2} and \ref{thm:TTV_main_3}, for the ground states family: namely we show that, for $\rho$ 
larger than a suitable threshold, the local minimizer may still exist and, in such a case, depend on $y$ in a nontrivial way.

More precisely, for $t^*>0$ fixed, let us consider the following problem:
\begin{equation}\label{energy_u}
m_{\rho}:=\inf\{E(u): \ u \in S_{\rho}, \ \|\nabla_{x,y} u\|^{2}_{L^{2}(\R^{N}\times M^{k})} < t^{*} \rho^{2}\}.
\end{equation}
Our main results are the following.
\begin{theorem}\label{thm:main}
Let $(M^k,g)$ be a compact Riemannian manifold of dimension $k\ge1$, $N\ge1$ and
\begin{equation}\label{eq:ass_on_alpha}
\frac{4}{N+k} \leq \alpha < \min\left\{\frac{4}{N},\frac{4}{N+k-2}\right\}.
\end{equation}
Then, for $m_\rho$, $I_\rho$ defined as in \eqref{energy_u}, \eqref{eq:GS_RN_subc}:
\begin{enumerate}
\item \label{thm:main_1} There exist $t^*>0$, independent of $\rho$, and $\thresh{ex}\in(0,\infty)$ such that, if $\rho<\thresh{ex}$, then $m_\rho$ is achieved by a family of solutions to \eqref{eq:main}.
\item \label{thm:main_2} There exists $\thresh{tr}\in(0,\thresh{ex}]$ such that, if $\rho<\thresh{tr}$, 
then every local minimizer $u$ associated to  
\eqref{energy_u} satisfies $\nabla u_{y}=0$ and is a global minimizer of \eqref{eq:GS_RN_subc}; in particular, $m_\rho=I_\rho$ and the families of
minimizers coincide.
\item \label{thm:main_3} If $  \thresh{tr} < \thresh{ex}$ then, for every 
$\rho \in ( \thresh{tr}, \thresh{ex})$, every local minimizer is nontrivial with respect to $y$ and $m_\rho<I_\rho$.
\end{enumerate}
\end{theorem}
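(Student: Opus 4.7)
The plan attacks the three parts in order.

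\textbf{Part 1 (existence).} The starting point is the Gagliardo--Nirenberg inequality on $\R^{N}\times M^{k}$, valid in the Sobolev-subcritical regime $\alpha<4/(N+k-2)$. Combined with the mass constraint it gives, on $S_{\rho}$,
\[
E(u)\ge \tfrac{1}{2}t - C\rho^{(2+\alpha)(1-\theta)}t^{(2+\alpha)\theta/2}, \qquad t:=\|\nabla_{x,y}u\|_{L^{2}}^{2},
\]
with $(2+\alpha)\theta=\alpha(N+k)/2\ge 2$ (equality in the mass-critical case $\alpha=4/(N+k)$). The right-hand side has the typical mountain-pass profile $t/2-c(\rho)\,t^{\kappa}$ with $\kappa\ge 1$, and a positive local maximum at a scale-invariant value $t^{*}\rho^{2}$. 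Choosing this $t^{*}$, the open ``well'' $\{u\in S_{\rho}:t<t^{*}\rho^{2}\}$ has $E$ bounded below and strictly smaller inside than on the boundary $\{t=t^{*}\rho^{2}\}$, as soon as $\rho<\thresh{ex}$. A Lions-type concentration-compactness argument, dealing with the $x$-translation non-compactness in $\R^{N}$, then yields a minimizer, which automatically solves \eqref{eq:main} since it lies in the interior of the constraint.

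\textbf{Part 2 ($y$-independence for small $\rho$).} Given a minimizer $u$ of $m_{\rho}$, I would decompose
\[
u(x,y)=\bar u(x)+w(x,y), \qquad \bar u(x):=\int_{M^{k}}u(x,y)\dvol,
\]
so that $\int_{M^{k}}w\dvol=0$ and the Poincar\'e inequality on $M^{k}$ gives $\|w\|_{L^{2}}^{2}\le \lambda_{1}^{-1}\|\nabla_{y}u\|_{L^{2}}^{2}\le \lambda_{1}^{-1}t^{*}\rho^{2}$, where $\lambda_{1}>0$ is the first positive eigenvalue of $-\Delta_{y}$. Writing $v\in S_{\rho}$ for the mass-renormalization of $\bar u$, a Taylor expansion of $|u|^{2+\alpha}$ around $\bar u$ together with the smallness of $w$ would give $E(u)\ge E(v)+c\|w\|_{L^{2}}^{2}$ for $\rho$ small enough, forcing $w\equiv 0$. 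Once $u\in H^{1}(\R^{N})\cap S_{\rho}$, the classical uniqueness (up to translations and phase shifts) of the $\R^{N}$ ground state identifies $u$ with $z_{\rho}$, giving $m_{\rho}=I_{\rho}$.

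\textbf{Part 3 ($y$-dependence in the intermediate regime).} The heart is a second-variation computation at $z_{\rho}$ on $S_{\rho}$ in a direction $\phi(y)\psi(x)$ with $\phi$ a first eigenfunction of $-\Delta_{y}$ of eigenvalue $\lambda_{1}$. Since $\int_{M^{k}}\phi\dvol=0$ the first-order terms drop, and after the second-order $L^{2}$-correction $\gamma_{\varepsilon}=1-\tfrac12\varepsilon^{2}\|\phi\|^{2}\|\psi\|^{2}/\rho^{2}+O(\varepsilon^{4})$ and using the Euler--Lagrange identity $\|\nabla z_{\rho}\|^{2}+\omega(\rho)\rho^{2}=\|z_{\rho}\|_{2+\alpha}^{2+\alpha}$, one obtains
\[
\left.\tfrac{d^{2}}{d\varepsilon^{2}}\right|_{\varepsilon=0}E\!\left(\gamma_{\varepsilon}(z_{\rho}+\varepsilon\phi\psi)\right)=\|\phi\|_{L^{2}}^{2}\,\bigl\langle (L_{\rho}+\lambda_{1})\psi,\psi\bigr\rangle,
\]
where $L_{\rho}:=-\Delta_{x}+\omega(\rho)-(1+\alpha)|z_{\rho}|^{\alpha}$ is the NLS linearization at $z_{\rho}$. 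The standard scaling $z_{\rho}(x)=\mu\,z_{1}(\eta x)$ with $\mu^{\alpha}=\eta^{2}=\omega(\rho)$ (valid because $\alpha<4/N$ makes the $\R^{N}$ problem mass-subcritical) makes $L_{\rho}$ a dilation of $L_{1}$, so its bottom eigenvalue equals $-|\mu_{0}(1)|\omega(\rho)$ and is strictly decreasing in $\rho$. Defining $\thresh{tr}$ by $|\mu_{0}(1)|\omega(\thresh{tr})=\lambda_{1}$, for $\rho>\thresh{tr}$ the quadratic form $\langle(L_{\rho}+\lambda_{1})\psi,\psi\rangle$ is negative in some direction, so $z_{\rho}$ fails to be a local minimizer of $E|_{S_{\rho}}$. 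Hence $m_{\rho}<E(z_{\rho})=I_{\rho}$, and any minimizer $u$ of $m_{\rho}$ must be $y$-dependent, for otherwise $u\in H^{1}(\R^{N})\cap S_{\rho}$ would give $E(u)=E_{0}(u)\ge I_{\rho}$. The main obstacle is to calibrate $t^{*}$ from Part~1 so that $\|\nabla z_{\rho}\|^{2}<t^{*}\rho^{2}$ holds throughout $\rho<\thresh{ex}$, keeping $z_{\rho}$ admissible as a competitor, and to rigorously establish the monotonicity of $\mu_{0}(\rho)$ via the scaling identity that identifies $\thresh{tr}$ with the spectral threshold above.
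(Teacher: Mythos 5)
Your Part 1 matches the paper's approach in outline (Gagliardo--Nirenberg, mountain--pass structure in the scale--invariant variable $t/\rho^{2}$, concentration--compactness in $\R^{N}$), but you have written the GN inequality on $\R^{N}\times M^{k}$ without the additive mass term: the correct form \eqref{eq:GN} is $\|u\|_{2+\alpha}^{2+\alpha}\le A(\|\nabla u\|_{2}^{2}+B\|u\|_{2}^{2})^{\theta/2}\|u\|_{2}^{2+\alpha-\theta}$, and the $B\rho^{2}$ term is not decorative: without it the lower bound $f(t,\rho)$ in \eqref{eq:def_f} would be positive for \emph{all} $t$ when $\rho$ is small and $\alpha=4/(N+k)$, which is false (the energy well is always negative by Lemma~\ref{lem:negative_m_rho}). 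The optimization over $(t^{*},\rho^{*})$ in \eqref{eq:lower} hinges on this term, so keep it.

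There is a genuine gap in your Part 2. You decompose $u=\bar u+w$ with $\bar u(x)=\int_{M^{k}}u\,\dvol$ and Taylor-expand the nonlinearity, claiming $E(u)\ge E(v)+c\|w\|_{2}^{2}$ for $\rho$ small. The second-order term in that expansion is $-\frac{1+\alpha}{2}\int|\bar u|^{\alpha}w^{2}$, so for the Poincar\'e term $\frac{1}{2}\|\nabla_{y}w\|_{2}^{2}\ge\frac{\mu_{1}}{2}\|w\|_{2}^{2}$ to win you need something like $(1+\alpha)\|\bar u\|_{\infty}^{\alpha}<\mu_{1}$ (plus remainder control). For $N\ge2$ the $H^{1}$ bound inherent in the constraint does not give $L^{\infty}$ control of $\bar u$, and even for $N=1$ you must show $\|\bar u\|_{\infty}\to0$ as $\rho\to0$, which requires proving that $\bar u$ approaches $Z_{\rho}$ (spread out and small) -- exactly the convergence the paper works to establish. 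The paper's route avoids this by rescaling to $E_{\lambda}$, proving $\lambda_{j}\|\nabla_{y}u_{\lambda_{j}}\|_{2}^{2}\to0$ via a refined GN estimate (Lemmas~\ref{lem:mlambdafrombelow}--\ref{lem:ourlemma3.2}), obtaining strong convergence $u_{\lambda_{j}}\to Z_{1}$ in $H^{1}$, and \emph{only then} running the linearization argument. Separately, in Part 3 you define $\thresh{tr}$ by the spectral condition at $Z_{\rho}$ and implicitly assume that the set $\{\rho : m_{\rho}<I_{\rho}\}$ is an interval adjacent to $\thresh{ex}$; this monotonicity is not automatic and is the content of Lemma~\ref{lem:topointer} in the paper (proved by a comparison in $\lambda$, needed because the admissible set in \eqref{energy_au} also moves with $\lambda$). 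Your second-variation computation is correct in spirit and is essentially Proposition~\ref{prop:rhotrfrombelow}, but it only yields an \emph{upper bound} for the true $\thresh{tr}$; without the interval structure one cannot conclude that every local minimizer is $y$-trivial for all $\rho<\thresh{tr}$ and $y$-nontrivial for all $\rho\in(\thresh{tr},\thresh{ex})$.
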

\begin{remark}\label{rmk:sharp}
It is worth noticing that both the existence threshold $\thresh{ex}$ and the (non)triviality one 
$\thresh{tr}$ depend on the exponent $\alpha$ (and, of course, on $(M^k,g)$ and $N$).

Moreover, although we propose some more or less explicit choices of the existence threshold 
$\thresh{ex}$,  
it is not clear how one may define it to obtain a sharp characterization (i.e. non-existence of local 
minimizers for $\rho>\thresh{ex}$). Actually, its existence is based on the Gagliardo-Nirenberg 
inequality \eqref{eq:GN}, and its value is strongly influenced by the choice of $t^*$ (see Remark 
\ref{rem:ex_thresh} ahead for further comments on this issue). On the other hand, once $\thresh{tr}
<\thresh{ex}$, then the (non)triviality threshold $\thresh{tr}$ is sharp, as we show in Section 
\ref{sec:proof:thm1_2}, although not explicit (see Corollary \ref{coro:triv}). This issues have effect also when one tries to 
estimate the two thresholds and their difference, see Remark \ref{rmk:2examples} below. 
\end{remark}
\begin{remark}\label{rmk:mass_crit}
In particular our results hold true in the mass critical case 
\[
\alpha=\frac{4}{N+k}.
\] 
Actually, in this case one can combine our proof with the arguments in the 
appendix of \cite{MR3219500} to show that the local minimizers that we find are 
global ones, and one can formally choose $t^*=+\infty$ in such case.
\end{remark}
\begin{remark}\label{rmk:positive}
By using the diamagnetic inequality, we deduce that (up to a remodulation factor) 
every local minimizer is real valued. Moreover, being both $E$ and the constraint 
in the definition of $m_\rho$ even, in a standard way we can assume that every local minimizer 
is strictly positive in $\R^{N}\times M^{k}$. Throughout the paper we will always understand this 
choice,  without further notice.
\end{remark}
\begin{remark}\label{rmk:atthethreshold}
It is a natural question, in case $\thresh{tr}<\thresh{ex}$, to wonder what happens 
for $\rho=\thresh{tr}$. In this case one can easily see that $m_\rho=I_\rho$, and it admits 
minimizers constant in the variable $y$. On the other hand, nontrivial in $y$ minimizers 
may exist or not, depending on the associated bifurcation diagram. The recent paper  
\cite{dovetta2025nonuniquenessnormalizednlsground} analyzes a similar question for global minimizers 
of the NLS energy on bounded domains of $\R^N$, with Neumann boundary conditions. 
The results therein suggest that the answer to such question may depend on the exponent $\alpha$, and 
that in some cases trivial and nontrivial minimizers may coexist at $\rho=\thresh{tr}$.
\end{remark}
\begin{remark}\label{rmk:locmin}
After \cite{ntvAnPDE,MR3638314}, it has been frequently observed that in the normalized setting, 
in absence of global minimizers, local ones may appear. This has been extended to several different 
contexts, including metric graphs  \cite{MR4241295} and Mean Field Games systems \cite{CCV23}.
\end{remark}
\begin{remark}\label{rmk:sobcrit}
Notice that, as long as $k\ge2$, our results hold true for every $\alpha$ strictly smaller than 
the Sobolev critical exponent. 
Recent papers \cite{MR4096725,MR4847285,verzini2025normalizedsolutionsnonlinearschrodinger} suggest that also
in the Sobolev critical case 
\[
\alpha=\frac{4}{N+k-2}
\]
local minimizers should exist, below an appropriate threshold $\thresh{ex}$. On the other hand, in such 
case the analysis of their (non)triviality in $y$ 
may be very delicate.
\end{remark}
\begin{remark}\label{rmk:orb_stab}
Following \cite{MR3918087} it should be possible to show that the families of local minimizers 
are conditionally orbitally stable, where the conditions are that a) the (weak) solution of \eqref{eq:evoNLS} 
exists locally in time, for a time interval which is uniform in the $H^1$-norm of the initial datum and 
b) that energy and mass are conserved across the evolution.
\end{remark}
\begin{remark}\label{rmk:MP}
As a matter of fact, in the proof of Theorem~\ref{thm:main}, part \ref{thm:main_1} we show that 
$E$ has a mountain pass structure on $S_\rho$, for $\rho<\thresh{ex}$. 
It remains an open problem to find (unstable) solutions of mountain pass type.
\end{remark}
\begin{remark}\label{rmk:2examples}
Probably the most natural question after Theorem \ref{thm:main} is how to detect if 
$\thresh{tr}<\thresh{ex}$, and if such case happens under some conditions. As we mentioned, 
although sharp explicit values for the thresholds are out of reach, we provide different 
estimates in this direction. In particular, a sufficient condition for the existence of 
nontrivial local minimizers is provided in Corollary \ref{coro:criterion}, inequality 
\eqref{eq:criterion}. This is not easy to be checked, though, because it depends on 
$I_\rho$, on the constants appearing in the Gagliardo-Nirenberg inequality, and on other spectral quantities related to $M^k$, and such values are known and explicit only in very particular cases.
\end{remark}

In view of the last remark, we devote the last part of the paper to some examples in which 
we can prove that 
\begin{equation}\label{eq:strict_thresh_ineq}
\thresh{tr}<\thresh{ex},
\end{equation}
providing the existence of nontrivial local minimizers. A first general example is contained in the next 
proposition, which says that it is always possible to satisfy \eqref{eq:strict_thresh_ineq} by changing 
the metric of $M^k$, without modifying the topology, at least when the power $\alpha$ is not too large.
\begin{proposition}\label{propo:case_1}
Assume that $N\ge1$ and $\dim M^{k} =k\geq 2$. There exists a metric $\hat g$ on $M^{k}$ and $\delta>0$ such that, for every 
\[
\dfrac{4}{N+k} \leq \alpha \leq \dfrac{4}{N+k}+\delta, 
\]
\eqref{eq:strict_thresh_ineq} holds true.
\end{proposition}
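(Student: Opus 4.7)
The plan is to apply the sufficient criterion of Corollary~\ref{coro:criterion}, namely the inequality~\eqref{eq:criterion}, by constructing a suitable metric $\hat g$ on $M^{k}$ for which it is satisfied at the mass-critical exponent $\alpha_{0}:=\frac{4}{N+k}$, and then propagating the conclusion to a neighbourhood of $\alpha_{0}$ by continuity.

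As recalled in Remark~\ref{rmk:2examples}, the inequality~\eqref{eq:criterion} compares the value $I_\rho$ and the constants in the Gagliardo--Nirenberg inequality~\eqref{eq:GN} with spectral quantities of $(M^{k},g)$. The natural heuristic is that \eqref{eq:criterion} becomes easier to satisfy as the first non-zero Laplace--Beltrami eigenvalue $\lambda_{1}(M^{k},g)$ decreases: informally, once the constant-in-$y$ ground state is destabilised in the direction of the first eigenfunction of $-\Delta_{y}$, nontrivial minimisers must appear inside the existence regime. Thus I would first isolate the dependence of~\eqref{eq:criterion} on $\lambda_{1}(M^{k},g)$ (keeping the remaining ingredients---$I_{\rho}$, the sharp Gagliardo--Nirenberg constant, and any auxiliary spectral data---uniformly controlled).

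The second step is the geometric construction. Since $\dim M^{k}=k\ge2$, I would produce a smooth one-parameter family of Riemannian metrics $\hat g_{\eps}$ on $M^{k}$ with
\[
\vol_{\hat g_{\eps}}(M^{k}) = 1, \qquad \lambda_{1}(M^{k},\hat g_{\eps}) \xrightarrow[\eps\to 0]{} 0.
\]
A standard way is a dumbbell/thin-neck construction: fix a smooth hypersurface $\Sigma\subset M^{k}$ separating $M^{k}$ into two components of positive volume, and deform the metric in a tubular neighbourhood of $\Sigma$ so that it becomes a cylinder of length $L(\eps)\to\infty$ and cross-sectional volume of order $\eps$, while rescaling the two bulk pieces to keep the total volume equal to $1$. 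Standard comparison arguments in spectral geometry then give $\lambda_{1}\to 0$. The hypothesis $k\ge2$ is essential here, since in dimension one the volume constraint leaves no room for collapsing.

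Feeding $\hat g_{\eps}$ into~\eqref{eq:criterion} at $\alpha=\alpha_{0}$ and choosing $\eps$ small enough, the smallness of $\lambda_{1}$ makes the criterion hold strictly, yielding $\thresh{tr}<\thresh{ex}$ at $\alpha_{0}$. Finally, since all quantities entering~\eqref{eq:criterion} depend continuously on $\alpha$ on any compact subinterval of $[\alpha_{0},\frac{4}{N+k-2})$, a strict inequality at $\alpha_{0}$ persists on some small interval $[\alpha_{0},\alpha_{0}+\delta]$, which is the claimed range. The main obstacle is the quantitative control in the second and third steps: one needs to verify that the collapsing family $\hat g_{\eps}$ does not simultaneously degrade the other ingredients appearing in~\eqref{eq:criterion} (e.g.\ through the Gagliardo--Nirenberg constants on $\R^{N}\times M^{k}$), so that the smallness of $\lambda_{1}(M^{k},\hat g_{\eps})$ genuinely dominates; this is where a careful bookkeeping of the construction is required.
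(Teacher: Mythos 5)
Your overall strategy coincides with the paper's: invoke the sufficient criterion of Corollary~\ref{coro:criterion}, deform the metric to drive the first nonzero eigenvalue $\mu_1$ of $-\Delta_y$ to zero at unit volume (this is exactly Lemma~\ref{lem:mu_eps}), verify~\eqref{eq:criterion} at $\alpha_0=\frac{4}{N+k}$, and propagate by continuity in $\alpha$. However, you correctly flag but do \emph{not} resolve the central obstacle, and that obstacle is precisely the content of the proof: when you collapse the metric, the Gagliardo--Nirenberg constant $B=B(g)$ in~\eqref{eq:GN} may deteriorate, and you have no control over it, so it is not enough to observe that the left side of~\eqref{eq:criterion} tends to $0$.

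The missing idea is the following. At the mass-critical exponent $\alpha_*=\frac{4}{N+k}$ one has $\theta(\alpha_*)=2$, so the exponent $\frac{\theta(\alpha)}{2}-1$ vanishes and the right-hand side of~\eqref{eq:criterion} degenerates (by continuous extension, as noted in the statement of the corollary) to $\frac{2+\alpha_*}{2A(\alpha_*)}$: the constant $B(g)$ drops out entirely. Combined with Remark~\ref{rmk:A_independent}, which guarantees that $A$ can be taken to depend only on the cumulative dimension $N+k$ and not on the metric $g$, this makes the right-hand side of~\eqref{eq:criterion} \emph{independent of $g$} at $\alpha_*$. Only the left-hand side depends on $g$, through $\mu_1(g)$, and it tends to $0$ as $\mu_1(g)\to 0$. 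This is why Lemma~\ref{lem:mu_eps} suffices with no further bookkeeping: you fix $\hat g$ with $\mu_1(\hat g)$ small enough to beat the fixed threshold $\frac{2+\alpha_*}{2A(\alpha_*)}$, and then, since both sides of the criterion are continuous in $\alpha$, the strict inequality survives on an interval $[\alpha_*,\alpha_*+\delta]$. Without isolating this mass-critical degeneracy, your argument as written would fail, since one genuinely cannot rule out that $B(\hat g_\varepsilon)$ blows up as $\varepsilon\to 0$.
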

\begin{remark}\label{rmk:torus}
The previous result is based on the fact that, when $k\ge2$, it is always possible to find a metric 
on $M^k$ for which the first nontrivial eigenvalue $\mu_1$ of the Laplace-Beltrami operator $-\Delta_y$ 
on $M^k$ is arbitrarily small. In the same spirit, an analogous result holds true on the flat $k$-
dimensional torus, as long as it has one frequency sufficiently small. Recently, the study of 
NLS on waveguide manifolds $\R^N\times\mathbb{T}^k$ has attracted much attention, see e.g. \cite{MR4629760,MR4784921}.
\end{remark}
To conclude, we deal with one of the few cases in which one can do ``explicit'' computations (in terms of 
special functions), that is $\R\times\sph^{k}$ where $\sph^k$ is the unitary round sphere in $\R^{k+1}$.
\begin{proposition}\label{prop:sphere}
Let $N=1$ and $M^k=\sph^k$, with the standard round metric. If
\[
\begin{split}
\text{either } &k\ge4,\qquad \frac{4}{k+1} \leq \alpha < \frac{4}{k-1},\medskip \\
\text{or } &k=3,\qquad 1 \leq \alpha < 1 + \delta,
\end{split}
\]
for a suitable $0<\delta<1$, then \eqref{eq:strict_thresh_ineq} holds true.
\end{proposition}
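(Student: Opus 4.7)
The plan is to verify the sufficient condition \eqref{eq:criterion} from Corollary \ref{coro:criterion} in the specific setting $N=1$, $M^{k}=\sph^{k}$. Three ingredients make the verification manageable: the explicit form of the one-dimensional ground state, $Q_{\omega}(x)=\omega^{1/\alpha}Q_{1}(\sqrt{\omega}\,x)$ with $Q_{1}(\xi)=c_{\alpha}\,\mathrm{sech}^{2/\alpha}(\alpha\xi/2)$, which yields closed-form expressions (in terms of Beta functions) for $I_{\rho}$, for the Lagrange multiplier $\omega(\rho)\propto \rho^{4\alpha/(4-\alpha)}$, and for the sharp Gagliardo--Nirenberg constants entering the definition of $\thresh{ex}$; the value $\mu_{1}(\sph^{k})=k$ for the first nontrivial eigenvalue of $-\Delta_{y}$; and the classical Weinstein-type analysis of the linearized soliton problem on $\R$, which provides the (unique) negative principal eigenvalue $\lambda_{*}(\alpha)$ of the one-dimensional operator $-\partial_{\xi\xi}+1-(1+\alpha)Q_{1}^{\alpha}$.

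First I would characterize $\thresh{tr}$ via the second variation of $E$ at $u(x,y)=Q_{\omega(\rho)}(x)$ along perturbations with zero $y$-mean. Decomposing such perturbations on the spherical harmonics reduces the stability question to the non-negativity of the first eigenvalue of the one-dimensional operator $L_{j}=-\partial_{xx}+(\omega+\mu_{j})-(1+\alpha)Q_{\omega}^{\alpha}$ for every $j\ge 1$. Since $\mu_{j}$ is increasing, the critical mode is $j=1$, and by the scaling $Q_{\omega}^{\alpha}(x)=\omega\, Q_{1}^{\alpha}(\sqrt{\omega}\,x)$ the threshold becomes $\omega(\thresh{tr})=k/|\lambda_{*}(\alpha)|$. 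Inserting this into the $\omega$-to-$\rho$ relation yields an explicit formula for $\thresh{tr}$ in terms of $k$, $\alpha$, and the constants associated to $Q_{1}$, and the task reduces to comparing this $\omega$-value with the one determined by the existence bound $\thresh{ex}$.

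Finally I would compare $\thresh{tr}$ with $\thresh{ex}$. For $k\ge 4$ and $\alpha\in[4/(k+1),4/(k-1))$ one should be able to show, by a direct manipulation of the Beta-function expressions together with known monotonicity properties of $\lambda_{*}(\alpha)$, that \eqref{eq:criterion} holds throughout the range. For $k=3$ the bound is more delicate: at $\alpha=1=4/(N+k)$ we are at the mass critical exponent and, thanks to Remark \ref{rmk:mass_crit}, $t^{*}$ may be taken arbitrarily large so that $\thresh{ex}$ is unconstrained; it then suffices to verify $\thresh{tr}<+\infty$, which is immediate from the spectral characterization. By continuity in $\alpha$ of $\omega(\rho)$, $\lambda_{*}(\alpha)$, and of the sharp GN-constants, the strict inequality $\thresh{tr}<\thresh{ex}$ persists on $[1,1+\delta)$ for some $\delta>0$. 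The main obstacle I anticipate is the absence of a closed-form expression for $\lambda_{*}(\alpha)$, which is only known implicitly and essentially through its qualitative features: this is what prevents the $k=3$ analysis from being extended throughout the full range $(1,2)$ allowed by Theorem \ref{thm:main}, and it forces a careful tracking of the dependence on $\alpha$ of both thresholds in the $k\ge 4$ case.
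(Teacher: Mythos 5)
Your high-level idea---detect instability of $Z_\rho$ by testing the second variation along perturbations carrying the first spherical harmonic---is the same one the paper uses (Proposition~\ref{prop:rhotrfrombelow}). You propose a sharper variant: rather than plugging the test function $\varphi_1 Z_\rho$ into $E''(Z_\rho)$ and reading off a Rayleigh-quotient bound, you want to use the exact principal eigenvalue $\lambda_*(\alpha)$ of the one-dimensional linearization $L=-\partial_{xx}+1-(1+\alpha)U^\alpha$ and impose $\mu_1/\omega<|\lambda_*(\alpha)|$. This would indeed give a smaller upper bound on $\thresh{tr}$ (so an easier comparison with $\thresh{ex}$). Incidentally, your worry that $\lambda_*(\alpha)$ is only implicit is unfounded when $N=1$: since $U^\alpha(x)=(1+\alpha/2)\,\mathrm{sech}^2(\alpha x/2)$, $L$ is a P\"oschl--Teller operator with $\ell=1+2/\alpha$, $\beta=\alpha/2$, and
\begin{equation*}
\lambda_*(\alpha)=1-\left(1+\tfrac{\alpha}{2}\right)^2=-\frac{\alpha(4+\alpha)}{4}.
\end{equation*}
Also note that this reasoning only gives $\thresh{tr}\le$ (spectral threshold); asserting $\omega(\thresh{tr})=k/|\lambda_*(\alpha)|$ is unjustified, since $\thresh{tr}$ is characterized in Corollary~\ref{coro:triv} by $m_\rho<I_\rho$, which is a consequence of, but not a priori equivalent to, spectral instability of $Z_\rho$. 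This overstatement is harmless for the intended use, but it is not established.

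The genuine gaps are elsewhere. First, your treatment of $k=3$ is wrong. At $\alpha=1=4/(N+k)$ one is in the mass-critical regime, and taking $t^*=+\infty$ removes the gradient constraint, but $\thresh{ex}$ is \emph{not} unconstrained: the coercivity estimate in Lemma~\ref{lem:bdd} still forces a finite existence threshold (see \eqref{eq:exist_mass_crit}, $\thresh{ex}\le(\tfrac{2+\alpha}{2A})^{1/\alpha}$), and indeed for $\rho$ above the Gagliardo--Nirenberg critical mass, $E$ is unbounded below on $S_\rho$ and no local minimizer exists. So ``it suffices to verify $\thresh{tr}<+\infty$'' is incorrect; a genuine quantitative comparison with the finite $\thresh{ex}$ is required even at $\alpha=1$.

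Second, the actual content of the proof---the quantitative verification of $\thresh{tr}<\thresh{ex}$ over the stated ranges of $k$ and $\alpha$---is not carried out. You correctly identify the ingredients (explicit 1D soliton, $\mu_1(\sph^k)=k$, the spectral criterion), but the comparison with $\thresh{ex}$ requires (i) explicit values of the constants $A,B$ in \eqref{eq:GN} on $\R\times\sph^k$ (the paper invokes the sharp Sobolev inequality of \cite[Thm.~7.7]{MR1688256}, giving \eqref{eq:exa_A_and_B}); (ii) reduction of the criterion in Remark~\ref{rem:criterionwithvolume} to an inequality between ratios of Beta/Gamma functions (\eqref{eq:suff_cond_sph}); and (iii) case-by-case estimates which are purely analytic only for $k$ large enough, while low-$k$ cases ($k=4,5$, and $k=3$ near $\alpha=1$) require the improved threshold \eqref{eq:better_thresh} from Remark~\ref{rem:improvedcriterionwithvolume} plus Computer Algebra System verification. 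None of this is addressed in your sketch, and the ``direct manipulation of the Beta-function expressions'' you allude to is precisely where the work lies.
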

\begin{remark}\label{rmk:calconsph}
The previous proposition is a consequence of the fact that the constants in our estimates are somewhat explicit 
in the considered setting. Since such constants depend in various ways on special functions, and 
in particular on the Euler beta function, we have an analytic proof for the cases 
\[
k\ge 6,\  \frac{4}{k+1} \leq \alpha < \frac{4}{k-1},
\qquad\text{ and }\qquad
3\le k\le 5,\  \frac{4}{k+1} \leq \alpha < \frac{4}{k+1} + \delta_k. 
\]
The missing part of the proposition can  be checked case by case using any Computer Algebra System 
software with built-in libraries for symbolic computation with special functions. On the other hand, our 
estimates provide no information for the case $k=2$, not even with restrictions on $\alpha$. See the end of Section \ref{sec:sphere} for 
more details.
\end{remark}

The paper is structured as follows: after some preliminary results in Section \ref{sec:prelim}, 
Section \ref{sec:proof:thm1_1} is devoted to the proof of Theorem~\ref{thm:main}, part \ref{thm:main_1}, 
and to some estimates of $\thresh{ex}$ from below, and Section \ref{sec:proof:thm1_2} to the proof of 
Theorem~\ref{thm:main}, parts \ref{thm:main_2} and  \ref{thm:main_3}. In Section \ref{sec:proof:thm1_3} 
we develop some  estimates of $\thresh{tr}$ from above. Finally, Sections \ref{sec:smalleigen} and 
\ref{sec:sphere} contain the proofs of Propositions \ref{propo:case_1}, \ref{prop:sphere}, respectively.

\section{Preliminary results}\label{sec:prelim}

Let $(M^k,g)$ be a smooth compact Riemannian manifold, with $k\ge1$, and $N\ge1$. To start with, we 
recall that the Gagliardo-Nirenberg inequality can be extended naturally to product spaces $\R^{N}\times M^{k}$ (that we always endow with the natural metric). In particular, for every 
$0\le\alpha\le \frac{4}{N+k-2}$ (or $\alpha\ge0$, in case $N=k=1$) there exist positive constants $A,B$ 
such that
\begin{equation}\label{eq:GN}
\|u\|^{2+\alpha}_{L^{2+\alpha}(\R^{N}\times M^{k})} \leq A \left(\|\nabla_{x,y} u\|_{L^{2}(\R^{N}\times M^{k})}^{2}+B\|u\|_{L^{2}(\R^{N}\times M^{k})}^{2}\right)^{\theta(\alpha)/2}\|u\|^{2+\alpha-\theta(\alpha)}_{L^{2}(\R^{N}\times M^{k})},
\end{equation}
for every $u\in H^1(\R^{N}\times M^{k})$, where 
\begin{equation}\label{eq:teta}
\theta(\alpha)=\frac{(N+k)\alpha}{2}.
\end{equation}
\begin{remark}\label{rmk:A_independent}
In principle, $A$ and $B$ may depend on $(M^k,g)$, $N$, and $\alpha$, especially in case one looks for the 
best possible constants. On the other hand, in case $N+k\ge3$ (in particular, if $k\ge2$), 
a loose version of \eqref{eq:GN} can be obtained from the Sobolev inequality combined with H\"older's one. 
More precisely, since 
$M^k$ is compact and $\R^N$ is flat, both the Ricci curvature and the injectivity radius of 
$\R^{N}\times M^{k}$ are uniformly bounded from below, in an appropriate way, and we are in a position to apply 
\cite[Thm. 7.1, p. 183]{MR1688256} (just notice that here we use slightly different notations). 
As a consequence, while $B$ may still depend on all the parameters excluded $\alpha$ 
(and in particular on $g$), $A$ is explicitly determined and it only depends on the cumulative dimension  
$N+k$ (via the standard critical Sobolev constant in $H^1(\R^{N+k})$ and the bounds on 
$\theta(\alpha)$).
\end{remark}

Next we briefly review some classical results in the Euclidean space $\R^{N}$, $N \geq 1$, that will be used below. It is well known that, in the mass subcritical case 
\[
2<\alpha<\frac{4}{N},
\] 
for every $\rho>0$ there exists a solution $Z_{\rho}\in H^{1}(\R^{N})$ (depending on $\alpha$), unique up to translations, for the problem
\begin{equation}\label{eq:Zr}
\begin{cases}
-\Delta_{x} Z_{\rho}+\omega_{\rho} Z_{\rho}= Z_{\rho}^{1+\alpha} \quad \text{in } \R_{x}^{N}\\
\|Z_{\rho}\|_{L^{2}(\R^{N})}=\rho, \ \ Z_{\rho}>0,
\end{cases}
\end{equation}
where $\omega_{\rho}>0$ is uniquely determined.  Therefore $Z_\rho$ is also a solution in \eqref{eq:main}, with $
\nabla_{y} Z_{\rho}=0$. Moreover (see e.g. \cite[Sec. 2]{MR4443784}), recalling the definition of $E_0$ in \eqref{fun:main_x}, we have that $Z_\rho$ achieves 
\eqref{eq:GS_RN_subc}, namely
\begin{equation}\label{eq:Irho}
E_{0}(Z_{\rho}) = I_{\rho} = \inf\{E_0(z): z \in S_\rho\cap H^1(\R^N)\}<0.
\end{equation}
By scaling, $Z_{\rho}$ can be written in terms of the unique positive solution $U \in H^{1}(\R^{N})$ of
\begin{equation}\label{eq:U}
\begin{cases}
-\Delta_{x} U+U= U^{1+\alpha} \quad \text{in } \R_{x}^{N}\\
U>0, \ \ U(0)=\|U\|_{\infty}.
\end{cases}
\end{equation}
Indeed, setting $\|U\|_{L^{2}(\R^{N})}=\rho_{0}$, we have that
\[
Z_{1}(x)=\rho_0^{-\frac{4}{4-\alpha N}}U\left(\rho_0^{-\frac{2\alpha}{4-\alpha N}}x\right) 
\qquad\text{and}\qquad
Z_{\rho}(x)=\rho^{\frac{4}{4-\alpha N}}Z_1\left(\rho^{\frac{2\alpha}{4-\alpha N}}x\right) 
\]
 and
\begin{equation}\label{eq:nabla_Z}
\|\nabla_{x} Z_{\rho}\|^{2}_{L^{2}(\R^{N})}= -\frac{2\alpha N}{4-\alpha N} I_{\rho}, \qquad  \|Z_{\rho}\|^{2+\alpha}_{L^{2+\alpha}(\R^{N})} =- \frac{4(\alpha+2)}{4-\alpha N} I_{\rho},
\end{equation}
so that
\begin{equation}\label{eq:eneg_Z}
I_{\rho}=E_0(Z_{\rho})=- \frac{4 - \alpha N}{8-2\alpha N+4\alpha}\left(\frac{\rho_{0}}{\rho}\right)^{\frac{4\alpha}{N\alpha -4}} \rho^{2}=:-G \rho^{2+\frac{4\alpha}{4-N\alpha }},
\end{equation}
where $G=-I_1>0$ and $\rho_0$ depend on $\alpha$.

\section{Proof of Theorem~\ref{thm:main}, part \ref{thm:main_1}, and estimate of 
\texorpdfstring{$\thresh{ex}$}{rho*-ex}}
\label{sec:proof:thm1_1}

In this section, under condition \eqref{eq:ass_on_alpha} we  show that one can suitably choose $t^*$ 
in the definition \eqref{energy_u} of $m_\rho$, i.e. 
\begin{equation*}
m_{\rho}:=\inf\{E(u): \ u \in S_{\rho}, \ \|\nabla_{x,y} u\|^{2}_{L^{2}(\R^{N}\times M^{k})} < t^{*} \rho^{2}\},
\end{equation*}
and the existence threshold $\thresh{ex}>0$, in such a way that a local minimizer of $E$ on $S_{\rho}$ 
exists, whenever $0< \rho< \thresh{ex}$. With respect to \cite{MR3219500}, where the nonlinearity is 
mass-subcritical and the corresponding solutions are global minimizers, in our case $E$ is unbounded 
below on $S_\rho$, for every $\rho>0$. Then, in order to obtain a bounded minimizing sequence not 
escaping to the boundary of the subset of $S_\rho$ which we are considering, it is necessary to analyze 
the (mountain pass) geometric structure of the functional. This is the main difference with respect to 
\cite{MR3219500}.

We start by showing that $m_\rho$ is negative, independently of the choice of $t^*$.
\begin{lemma}\label{lem:negative_m_rho}
For every $\rho>0$, $t^*>0$, we have that $m_\rho<0$.
\end{lemma}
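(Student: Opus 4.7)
The plan is to produce an explicit admissible test function with negative energy, so that $m_\rho$ is bounded above by a negative number. I would pick any nonzero $z \in H^1(\R^N)$ with $\|z\|_{L^2(\R^N)} = \rho$, regard it as a function on $\R^N \times M^k$ constant in $y$ (recall that, via the identification discussed after \eqref{eq:main} and the normalization $\vol(M^k)=1$, this embedding is an isometry for every $L^p$ norm), and then apply the standard $L^2$-preserving $\R^N$-dilation $z_\lambda(x) := \lambda^{N/2} z(\lambda x)$ for $\lambda \in (0,1)$.

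A direct computation yields $\|z_\lambda\|_{L^2} = \rho$, $\|\nabla_{x,y} z_\lambda\|_{L^2}^2 = \lambda^2 \|\nabla_x z\|_{L^2(\R^N)}^2$ and $\|z_\lambda\|_{L^{2+\alpha}}^{2+\alpha} = \lambda^{N\alpha/2} \|z\|_{L^{2+\alpha}(\R^N)}^{2+\alpha}$, so that
\[
E(z_\lambda) = \frac{\lambda^2}{2}\,\|\nabla_x z\|_{L^2(\R^N)}^2 \;-\; \frac{\lambda^{N\alpha/2}}{2+\alpha}\,\|z\|_{L^{2+\alpha}(\R^N)}^{2+\alpha}.
\]
The crucial point is that hypothesis \eqref{eq:ass_on_alpha} guarantees $\alpha < 4/N$, hence $N\alpha/2 < 2$. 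Therefore, as $\lambda \to 0^+$, the nonlinear term dominates the kinetic one, and $E(z_\lambda) < 0$ for all sufficiently small $\lambda$. Independently, $\|\nabla_{x,y} z_\lambda\|^2 = \lambda^2 \|\nabla_x z\|^2 < t^* \rho^2$ as soon as $\lambda$ is small enough. Choosing any $\lambda$ that meets both conditions, $z_\lambda$ is admissible in \eqref{energy_u}, and so $m_\rho \le E(z_\lambda) < 0$.

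There is essentially no obstacle here: the argument is a textbook application of the mass-subcritical scaling in $\R^N$, applied to a $y$-independent competitor (so that the geometry of $M^k$ plays no role). The only point to verify is that the exponent inequality $N\alpha/2 < 2$ is genuinely available under the hypotheses of Theorem~\ref{thm:main}, which is immediate from the upper bound $\alpha < \min\{4/N,\,4/(N+k-2)\} \le 4/N$ in \eqref{eq:ass_on_alpha}.
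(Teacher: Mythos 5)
Your proof is correct and follows essentially the same approach as the paper: an $L^2$-preserving $\R^N$-dilation applied to a $y$-independent competitor, with the sign forced by $N\alpha/2<2$ and the gradient constraint satisfied for the scaling parameter small. The only cosmetic difference is that the paper takes the specific competitor $z=Z_\rho$ (which already satisfies $E(Z_\rho)=I_\rho<0$) and, using $h^2\le h^{\alpha N/2}$ for $h<1$, gets the clean quantitative bound $m_\rho\le h^{\alpha N/2}I_\rho<0$ without passing to a limit; your version with an arbitrary $z$ and $\lambda\to0^+$ is equally valid.
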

\begin{proof}
Given $Z_\rho\in S_\rho$ as in \eqref{eq:Irho}, and $h>0$, let us consider the function
\[
\tilde z_h(x) := h^{N/2} Z_\rho (hx) \in S_\rho.
\]
Then 
\[
\|\nabla_{x,y} \tilde z_h \|^{2}_{L^{2}(\R^{N}\times M^{k})} = h^2 \|\nabla_{x} Z_\rho\|^{2}_{L^{2}(\R^{N})},
\qquad
\| \tilde z_h \|^{2+\alpha}_{L^{2+\alpha}(\R^{N}\times M^{k})} = h^{\alpha N/2} \|Z_\rho\|^{2+\alpha}_{L^{2+\alpha}(\R^{N})},
\]
and choosing 
\[
h^2<\min\left\{1,\frac{t^*\rho^2}{\|\nabla_{x} Z_\rho\|^{2}_{L^{2}(\R^{N})}}\right\} = 
\min\left\{1,\frac{4-\alpha N}{2\alpha N G}\,t^*\rho^{-\frac{4\alpha}{4-\alpha N}}\right\}
\]
(where we used \eqref{eq:nabla_Z}, \eqref{eq:eneg_Z}) we obtain that $\tilde z_h $ is an admissible competitor for 
$m_\rho$.

In turn, since $h<1$ and $\frac{\alpha N}{2}<2$ (by \eqref{eq:ass_on_alpha}), this yields
\begin{equation*}
\begin{aligned}
m_\rho \le E(\tilde z_h) &=\frac{h^2}{2} \|\nabla_{x}Z_\rho \|^{2}_{L^{2}(\R^{N}) } -
\frac{h^{\alpha N/2}}{2+\alpha} \|Z_\rho\|^{2+\alpha}_{L^{2+\alpha}(\R^{N}) }\\
&\le  \frac{h^{\alpha N/2}}{2} \|\nabla_{x} Z_\rho \|^{2}_{L^{2}(\R^{N}) } -
\frac{h^{\alpha N/2}}{2+\alpha} \| Z_\rho\|^{2+\alpha}_{L^{2+\alpha}(\R^{N}) }\\
&=h^{\alpha N/2} E(Z_\rho)<0,
\end{aligned}
\end{equation*}
concluding the proof.
\end{proof}

For the purpose of investigating the geometric structure of $E$, we define the following quantity:
\begin{equation}\label{eq:scal}
t(u):=\frac{\int_{M_{y}^{k}}\int_{\R^{N}_{x}} |\nabla_{x,y}u|^{2} \,dx  \dvol}{\int_{M_{y}^{k}}\int_{\R^{N}_{x}} |u|^{2}\,dx  \dvol}.
\end{equation}
To proceed, we provide the lower estimate of $E$.
\begin{lemma}\label{lem:bdd}
There exist constants $t^{*}, \thresh{ex}>0$ such that: for all $0<\rho<\thresh{ex}$, there exists $\eps>0$ small 
such that
\begin{equation}\label{eq:starstar}
E^{*}=E^*(\rho):=\inf\{E(u):u\in S_{\rho}, \ (1-\eps)t^*\le t(u) \le t^{*}\}>0.
\end{equation}
\end{lemma}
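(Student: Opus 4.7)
The plan is to apply the Gagliardo–Nirenberg inequality \eqref{eq:GN} on the sub-level set $\{u\in S_{\rho}: t(u)\le t^{*}\}$ to bound the nonlinear part of $E$ from above, and then compare with the quadratic part on the shell $\{(1-\eps)t^{*}\le t(u)\le t^{*}\}$.

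First I would use that for $u\in S_{\rho}$ with $t(u)\le t^{*}$ one has $\|\nabla_{x,y}u\|_{L^{2}}^{2}\le t^{*}\rho^{2}$, so that \eqref{eq:GN} together with $\|u\|_{L^{2}}=\rho$ gives
$$\|u\|_{L^{2+\alpha}}^{2+\alpha}\le A(t^{*}+B)^{\theta(\alpha)/2}\,\rho^{\theta(\alpha)}\cdot\rho^{2+\alpha-\theta(\alpha)}=A(t^{*}+B)^{\theta(\alpha)/2}\rho^{2+\alpha};$$
the exponent $2+\alpha-\theta(\alpha)$ is nonnegative by the Sobolev-subcritical restriction $\alpha<4/(N+k-2)$ in \eqref{eq:ass_on_alpha}. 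Using further that on the shell $\|\nabla_{x,y}u\|_{L^{2}}^{2}\ge(1-\eps)t^{*}\rho^{2}$, the definition \eqref{fun:main_xy} of the energy yields
$$E(u)\ge\frac{(1-\eps)t^{*}}{2}\rho^{2}-\frac{A(t^{*}+B)^{\theta(\alpha)/2}}{2+\alpha}\rho^{2+\alpha}=\rho^{2}\left[\frac{(1-\eps)t^{*}}{2}-C(t^{*})\rho^{\alpha}\right],$$
where I set $C(t^{*}):=A(t^{*}+B)^{\theta(\alpha)/2}/(2+\alpha)$.

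At this point I would fix $t^{*}>0$ arbitrarily and define
$$\thresh{ex}:=\left[\frac{t^{*}}{2\,C(t^{*})}\right]^{1/\alpha}.$$
For every $\rho<\thresh{ex}$ one has $C(t^{*})\rho^{\alpha}<t^{*}/2$, so any $\eps\in\bigl(0,\,1-2C(t^{*})\rho^{\alpha}/t^{*}\bigr)$ keeps the bracket above strictly positive, uniformly in $u$, giving $E^{*}(\rho)>0$. This provides the required $\thresh{ex}$ and, for each $\rho<\thresh{ex}$, a suitable (small) $\eps=\eps(\rho)$.

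I do not foresee a genuine obstacle. The only subtle point is the interplay between $t^{*}$ and $\thresh{ex}$: since $\theta(\alpha)/2\ge 1$ in the mass-critical/supercritical range $\alpha\ge 4/(N+k)$, the constant $C(t^{*})$ grows at least linearly in $t^{*}$, so enlarging $t^{*}$ does not enlarge $\thresh{ex}$. This echoes the non-sharpness of $\thresh{ex}$ announced in Remark~\ref{rmk:sharp}, and leaves the choice of $t^{*}$ free to be tuned when one later tries to extract explicit estimates in the examples of Sections~\ref{sec:smalleigen} and \ref{sec:sphere}.
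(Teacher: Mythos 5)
Your argument is correct and rests on the same Gagliardo--Nirenberg estimate on the shell $\{(1-\eps)t^*\le t(u)\le t^*\}$ that the paper uses. The only real point of divergence is that the paper does not fix $t^*$ arbitrarily: it writes $E(u)\ge f(t(u),\rho)$ with $f(t,\rho)=\rho^2\bigl[\tfrac12 t-\tfrac{A\rho^\alpha}{2+\alpha}(t+B)^{\theta/2}\bigr]$ and then \emph{solves} the tangency system $f(t^*,\rho^*)=0$, $\partial_t f(t^*,\rho^*)=0$, producing the explicit closed-form choice $t^*=\tfrac{4B}{(N+k)\alpha-4}$ and the resulting $\rho^*=\thresh{ex}$. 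If you maximize your own $\thresh{ex}(t^*)=\bigl[t^*/(2C(t^*))\bigr]^{1/\alpha}$ over $t^*$, you recover exactly the paper's formulas, so the two arguments are equivalent at the optimum; but leaving $t^*$ free as you do generally yields a smaller (sub-optimal) threshold. This is harmless for this lemma, which only claims existence of \emph{some} admissible $(t^*,\thresh{ex})$, but the paper's explicit optimized value is what feeds the later quantitative work (Remark~\ref{rem:ex_thresh}, Corollary~\ref{coro:criterion} and Sections~\ref{sec:smalleigen}--\ref{sec:sphere}), so if you need those estimates you should carry out the optimization step. One small aside: your remark that $2+\alpha-\theta(\alpha)\ge 0$ under the Sobolev-subcritical restriction is true, but it plays no role here, since you immediately multiply $\rho^{\theta(\alpha)}$ by $\rho^{2+\alpha-\theta(\alpha)}$ to get $\rho^{2+\alpha}$ regardless of the sign of the exponent.
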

\begin{proof}
By \eqref{eq:GN} and \eqref{eq:scal} we have, for every $u\in S_\rho$,
\begin{equation}\label{eq:def_f}
\begin{aligned}
E(u)&=\frac{1}{2} \|\nabla_{x,y}u\|^{2}_{L^{2}(\R^{N}\times M^{k}) } -\frac{1}{2+\alpha} \|u\|^{2+\alpha}_{L^{2+\alpha}(\R^{N}\times M^{k}) }\\
&\geq  \frac{1}{2} \|\nabla_{x,y}u\|^{2}_{L^{2}(\R^{N}\times M^{k}) } -\frac{A \rho^{2+\alpha-\theta(\alpha)}}{2+\alpha} \left(\|\nabla_{x,y}u\|^{2}_{L^{2}(\R^{N}\times M^{k}) } +B \rho^{2}\right)^{\theta(\alpha)/2}\\
&=\rho^{2}\left[\frac{1}{2} \frac{ \|\nabla_{x,y}u\|^{2}_{L^{2}(\R^{N}\times M^{k}) }}{\rho^{2}} -\frac{A \rho^{\alpha}}{2+\alpha} \left(\frac{\|\nabla_{x,y}u\|^{2}_{L^{2}(\R^{N}\times M^{k}) } }{\rho^{2}}+B\right)^{\theta(\alpha)/2}\right]\\
&=\rho^{2} \left [ \frac{1}{2} t(u) -\frac{A \rho^{\alpha}}{2+\alpha} \left( t(u) +B \right)^{\theta(\alpha)/2}\right]=: f(t(u),\rho).
\end{aligned}
\end{equation}
Now, in case $\alpha=\frac{4}{N+k}$, then $\theta(\alpha)=2$ and
\[
f(t,\rho)>0 \qquad\iff\qquad
\left(\frac{2+\alpha}{2A\rho^\alpha}-1\right)t>B.
\]
We infer that, in this case, it is enough to choose any
\begin{equation}\label{eq:exist_mass_crit}
\thresh{ex}<\left(\frac{2+\alpha}{2A}\right)^{1/\alpha},
\end{equation}
with $t^*$ suitably large and $\eps$ small 
(actually, also the equality is admissible, if one allows $t^*$ to depend on $\rho$, or taking $t^*=+\infty$). 

On the other hand, in case $\frac{4}{N+k} < \alpha < \min\left\{\frac{4}{N},\frac{4}{N+k-2}\right\}$, then  
$\theta(\alpha)>2$ and we claim the existence of positive $t^*$, $\rho^*$, such that
\begin{equation}\label{eq:lower}
\begin{cases}\displaystyle
f(t^{*},\rho^{*})=(\rho^{*})^{2}\left[ \frac{1}{2} t^{*} -\frac{A (\rho^{*})^{\alpha}}{2+\alpha} \left( t^{*} +B \right)^{\theta(\alpha)/2}\right]= 0\medskip\\ \displaystyle
\partial_{t}f(t^{*},\rho^{*})=(\rho^{*})^{2}\left[ \frac{1}{2}-\frac{A \theta(\alpha) (\rho^{*})^{\alpha}}{2(2+\alpha)} \left( t^{*} +B \right)^{\theta(\alpha)/2-1}\right]= 0.
\end{cases}
\end{equation}
Since $f(t,\rho)/\rho^2$ is (continuous and) strictly decreasing in $\rho$, for every fixed $t$, this will yield the desired result (with $\thresh{ex}=\rho^*$). 

On the other hand, \eqref{eq:lower} can be easily solved by direct computations: from the second equation , we obtain
\begin{equation*}
(\rho^{*})^{\alpha}=\frac{2+\alpha}{A\theta(\alpha)}(t^{*}+B)^{1-\theta(\alpha)/2},
\end{equation*}
and plugging it into \eqref{eq:lower} yields
\begin{equation*}
t^{*}=\frac{4B}{(N+k)\alpha-4} \qquad \text{and} \qquad \rho^{*} =\left[ \frac{2+\alpha}{\theta(\alpha)AB^{\theta(\alpha)/2-1}} \left( \frac{\theta(\alpha)-2}{\theta(\alpha)}\right)^{\theta(\alpha)/2-1}  \right]^{1/\alpha}.
\end{equation*}
Notice that this choice of $\thresh{ex}$ agrees (continuously, in the limit) also with that in \eqref{eq:exist_mass_crit} for 
the mass critical case.
\end{proof}
\begin{corollary}\label{coro:mrhofrombelow}
Let $\thresh{ex}$, $t^*$ as in Lemma \ref{lem:negative_m_rho}. Then, for every $0<\rho<\thresh{ex}$, 
\[
m_\rho\ge -\frac{A B ^{\theta(\alpha)/2}}{2+\alpha} \rho^{2+\alpha}.
\]
\end{corollary}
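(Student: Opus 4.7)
The strategy is to upgrade the pointwise bound $E(u)\ge f(t(u),\rho)$ from \eqref{eq:def_f} into a uniform lower bound on the admissible set. Since every $u$ admissible in the definition of $m_\rho$ satisfies $t(u)\in[0,t^*)$, and since a direct evaluation gives
\[
f(0,\rho)=-\frac{A\rho^\alpha B^{\theta(\alpha)/2}}{2+\alpha}\,\rho^{2}=-\frac{AB^{\theta(\alpha)/2}}{2+\alpha}\,\rho^{2+\alpha},
\]
matching the desired constant, the corollary will follow once I show that $t\mapsto f(t,\rho)$ attains its minimum on $[0,t^*]$ at $t=0$, for every $\rho<\thresh{ex}$.

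I would establish this monotonicity case by case, following the split already made in the proof of Lemma~\ref{lem:bdd}. In the mass-critical regime $\alpha=\frac{4}{N+k}$, one has $\theta(\alpha)=2$, so $f(\cdot,\rho)$ is an affine function of $t$ with slope $\tfrac{\rho^2}{2}\bigl(1-\tfrac{2A\rho^\alpha}{2+\alpha}\bigr)$; this is strictly positive for $\rho<\thresh{ex}$ by the explicit choice \eqref{eq:exist_mass_crit}. In the genuinely mass-supercritical regime $\theta(\alpha)>2$, the derivative
\[
\partial_t f(t,\rho)=\rho^2\left[\tfrac{1}{2}-\tfrac{A\theta(\alpha)\rho^\alpha}{2(2+\alpha)}(t+B)^{\theta(\alpha)/2-1}\right]
\]
is strictly decreasing both in $t$ and in $\rho$; since \eqref{eq:lower} gives $\partial_t f(t^*,\thresh{ex})=0$, for $\rho<\thresh{ex}$ I obtain $\partial_t f(t^*,\rho)>0$, and hence $\partial_t f(t,\rho)>0$ throughout $[0,t^*]$.

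Taking the infimum of the chain $E(u)\ge f(t(u),\rho)\ge f(0,\rho)$ over the admissible set then yields the claimed bound on $m_\rho$. I do not foresee any real obstacle: the proof reduces to reading off the sign of an elementary one-variable function on a fixed interval, using only information already extracted in the proof of Lemma~\ref{lem:bdd}.
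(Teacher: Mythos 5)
Your proof is correct and follows exactly the same idea as the paper's (one-line) argument, namely that $m_\rho \ge \min_{0\le t\le t^*} f(t,\rho) = f(0,\rho)$; the paper leaves the monotonicity of $f(\cdot,\rho)$ on $[0,t^*]$ implicit, and your case-by-case check of $\partial_t f>0$ supplies exactly the verification that is omitted. The only minor imprecision is the phrase ``$\partial_t f$ is strictly decreasing in $\rho$'' (the bracketed factor is, not $\partial_t f$ itself because of the prefactor $\rho^2$), but since you only use the sign of $\partial_t f$ this does not affect the conclusion.
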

\begin{proof}
By \eqref{eq:def_f} we have, for $0<\rho<\thresh{ex}$,
\[
m_\rho \ge \min_{0\le t \le t^*} f(t,\rho) = f(0,\rho).\qedhere
\]
\end{proof}

With this choice of $t^*$, we can actually improve the content of Lemma \ref{lem:negative_m_rho}.
\begin{lemma}\label{lem:more_negative_m_rho}
Let $\thresh{ex}$, $t^*$ as in Lemma \ref{lem:negative_m_rho}. Then, for every $0<\rho<\thresh{ex}$, 
we have that 
\[
t(Z_\rho)<t^*\qquad \text{ and }\qquad m_\rho\le I_\rho<0.
\]
\end{lemma}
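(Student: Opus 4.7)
My plan is to reduce both conclusions to the single inequality $t(Z_\rho)<t^*$, from which the rest is essentially a restatement of known facts. Since $Z_\rho$ is independent of $y$ and $\vol(M^k)=1$, one has $\|Z_\rho\|^2_{L^2(\R^N\times M^k)}=\rho^2$ and $E(Z_\rho)=E_0(Z_\rho)=I_\rho$. Once the gradient bound $t(Z_\rho)<t^*$ is established, $Z_\rho$ is an admissible competitor in \eqref{energy_u}, so $m_\rho\le E(Z_\rho)=I_\rho$, and the strict inequality $I_\rho<0$ is already part of \eqref{eq:Irho}.

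The core task is therefore to prove $t(Z_\rho)<t^*$ for every $\rho\in(0,\thresh{ex})$. First I would make $t(Z_\rho)$ explicit: combining \eqref{eq:scal}, \eqref{eq:nabla_Z} and \eqref{eq:eneg_Z} gives
\[
t(Z_\rho)=\frac{\|\nabla_x Z_\rho\|_{L^2(\R^N)}^2}{\rho^2}=\frac{2\alpha N\,G}{4-\alpha N}\,\rho^{\frac{4\alpha}{4-N\alpha}}.
\]
Under \eqref{eq:ass_on_alpha} one has $0<\alpha<4/N$, so the exponent is strictly positive, and hence $\rho\mapsto t(Z_\rho)$ is continuous and strictly increasing on $(0,\infty)$, with $t(Z_\rho)\to 0$ as $\rho\to 0^+$.

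Next I would argue by contradiction. Assume there exists $\bar\rho\in(0,\thresh{ex})$ with $t(Z_{\bar\rho})\ge t^*$. By the continuity and monotonicity just observed, there is $\rho_0\in(0,\bar\rho]\subset(0,\thresh{ex})$ at which $t(Z_{\rho_0})=t^*$. Applying Lemma \ref{lem:bdd} at this mass level yields some $\eps>0$ for which $E^*(\rho_0)>0$, where the infimum in \eqref{eq:starstar} is taken over $u\in S_{\rho_0}$ with $(1-\eps)t^*\le t(u)\le t^*$. Since $Z_{\rho_0}$ belongs to this admissible set, $E(Z_{\rho_0})\ge E^*(\rho_0)>0$; but $E(Z_{\rho_0})=I_{\rho_0}<0$ by the reduction in the first paragraph, a contradiction. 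Hence $t(Z_\rho)<t^*$ on $(0,\thresh{ex})$, completing the proof.

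I do not anticipate a substantial obstacle here: the argument is a soft continuity/monotonicity bootstrap that exploits the strictly negative Euclidean ground state energy against the positivity window for $E$ furnished by Lemma \ref{lem:bdd}. The only point to be careful about is that the $\eps$ in Lemma \ref{lem:bdd} may depend on the mass, but this is harmless since we apply the lemma at the single level $\rho_0$.
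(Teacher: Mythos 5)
Your proof is correct and follows essentially the same route as the paper's: compute $t(Z_\rho)$ explicitly, observe it is small as $\rho\to 0^+$, and run a continuity-based contradiction against the positivity window $E^*(\rho)>0$ of Lemma~\ref{lem:bdd} at the first mass level where $t(Z_\rho)$ hits $t^*$. The only cosmetic difference is that the paper does not explicitly invoke strict monotonicity of $\rho\mapsto t(Z_\rho)$ (continuity alone suffices to find the crossing point), and your remark about $\eps$ possibly depending on $\rho_0$ is a reasonable precaution that the paper handles implicitly.
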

\begin{proof}
First of all, recall that, by \eqref{eq:nabla_Z}, \eqref{eq:eneg_Z},
\[
t(Z_\rho)=\frac{2\alpha N\,G}{4-\alpha N}\, \rho^{\frac{4\alpha}{4-N\alpha }},
\]
so that $t(Z_\rho)<t^*$ for $\rho>0$ sufficiently small. Assume by contradiction that such inequality is 
not always true in $(0,\thresh{ex})$. Then, by continuity, there exists $\bar\rho\in(0,\thresh{ex})$ such that
$t(Z_{\bar\rho})=t^*$. This yields
\[
E^*(\bar\rho) \le E(Z_{\bar\rho}) <0,
\]
in contradiction with \eqref{eq:starstar}.

Once the first claim is proved, the second one follows by definition of $m_\rho$ and $I_\rho$. 
\end{proof}

Based on the previous lemmas, and with this choice of $t^*$, $\thresh{ex}$, we have that, for every $0<\rho<\thresh{ex}$,
\[
m_{\rho}\le I_\rho< 0 < E^{*},
\] 
and therefore we can find a minimizing sequence  $(u_{n})_{n}$ for $m_{\rho}$, which is bounded by construction. 
Moreover, if we can show strong convergence (up to subsequences), then the limit $u$ satisfies 
\[
E(u)=m_\rho,\qquad\|\nabla_{x,y} u\|^{2}_{L^{2}(\R^{N}\times M^{k})} \le (1-\eps)t^{*} \rho^{2}.
\]
Thus $u$ is a local minimizer of $E$ solving \eqref{eq:main}, for a suitable Lagrange multiplier 
$\omega$, and the proof of Theorem~\ref{thm:main}, part \ref{thm:main_1}, is completed.

To prove the strong convergence of the minimizing sequence we can proceed as in the Appendix of \cite{MR3219500}, 
with minor changes. In particular, referring to the steps there, we have that all of them can be easily rephrased  
under the only assumption that the exponent $\alpha$ is Sobolev subcritical in dimension $N+k$, as we also assume, 
with three exceptions: the first step, i.e. the boundedness of the minimizing sequence, which follows here by 
construction; the third step, which gives the positive bound from below  
\begin{equation}\label{eq:bound_d}
\| u_n\|_{L^{2+\alpha}(\R^{N}\times M^{k})}\ge d>0,
\end{equation}
which also here follows by the fact that $m_\rho$ is negative (Lemma \ref{lem:more_negative_m_rho}); 
finally, the fourth step, which excludes the vanishing of the minimizing sequence (up to a sequence of 
translations in $\R^N$) by 
a localized Gagliardo-Nirenberg inequality for  $\| u\|_{L^{2+\frac{4}{N+k}}(\R^{N}\times M^{k})}$: in 
our case, 
one can argue in a standard way by considering a decomposition $\left\{Q_i\right\}_{i\in\N}$ of $\R^N$ 
into unitary cubes, and setting 
\begin{equation}\label{eq:ourlocGN1}
l_n:=\max_{i\in\N}  \|u_n\|_{L^{2+\alpha}(Q_i\times M^{k})}.
\end{equation}
Then \eqref{eq:bound_d} (and the Sobolev embedding $L^{2+\alpha}(Q_i\times M^{k})
\hookrightarrow H^{1}(Q_i \times M^{k})$, which holds with a constant $\tilde A$ independent of $i$)  
yields
\begin{equation}\label{eq:ourlocGN2}
\begin{split}
0<d &\le \| u_n\|_{L^{2+\alpha}(\R^{N}\times M^{k})}^{2+\alpha} = \sum_{i}  \|u_n\|^{2+\alpha}_{L^{2+\alpha}(Q_i\times M^{k})} \le l_n^{\alpha}\sum_{i}  \|u_n\|^{2}_{L^{2+\alpha}(Q_i\times M^{k})}\\
&\le \tilde A l_n^{\alpha}\sum_{i}  \|u_n\|^{2}_{H^{1}(Q_i\times M^{k})} = \tilde A l_n^{\alpha} \|u_n\|^{2}_{H^{1}(\R^N\times M^{k})}\le \tilde A  (t^*+1)\rho^2 \cdot l_n^{\alpha},
\end{split}
\end{equation}
and finally $l_n\ge d'>0$ for every $n$. This provides a sequence of translations $(\tau_n)_n\subset\R^N$ such that 
$u_n(x+\tau_n,y)$ has a nontrivial weak limit, up to subsequences. Then the proof can be completed as in the Appendix of \cite{MR3219500}.

\begin{remark}\label{rmk:mrhocontinuous}
An intermediate step in the proof above, that we will use later on, is that the map $\rho\mapsto m_\rho$ 
is continuous on $(0,\thresh{ex})$. This can be easily proved applying the transformation 
\[
S_{\rho_1} \ni u \mapsto \frac{\rho_2}{\rho_1} u \in S_{\rho_2}
\] 
to minimizing sequences, see \cite[Appendix, Second step]{MR3219500} for all the details.
\end{remark}
\begin{remark}[On the definition of $\thresh{ex}$]\label{rem:ex_thresh}
For future reference, we record that the proof of Lemma \ref{lem:bdd} provides as 
a first existence threshold for the 
validity of Theorem \ref{thm:main}, part \ref{thm:main_1}, the value
\[
\thresh{ex}:=\left[ \frac{2+\alpha}{\theta(\alpha)AB^{\theta(\alpha)/2-1}} \left( \frac{\theta(\alpha)-2}{\theta(\alpha)}\right)^{\theta(\alpha)/2-1}  \right]^{1/\alpha},
\]
for every $\frac{4}{N+k} \le \alpha < \min\left\{\frac{4}{N},\frac{4}{N+k-2}\right\}$ (continuously 
extended to $\alpha=\frac{4}{N+k}$). For most part of this paper this choice is satisfactory, but we 
will need to improve it at the very end of Section \ref{sec:sphere}.

Indeed, we notice that such choice is 
not optimal, and it is clear that the theorem holds true for some larger threshold, at least when 
$\alpha>\frac{4}{N+k}$. The reason for this lies in the discrepancy between the estimate of $E^*$ in 
Lemma \ref{lem:bdd} and the one of $m_\rho$ in Lemma \ref{lem:more_negative_m_rho}. Based on that, 
let us define the function
\begin{equation}\label{eq:ftilde}
\begin{split}
\tilde f(\rho)&:= \frac{1}{\rho^2}\left(f(t^*,\rho)-I_\rho\right)\\
&=\frac{1}{2} t^{*} -\frac{A \left( t^{*} +B \right)^{\theta(\alpha)/2}}{2+\alpha}\,\rho^{\alpha} 
+ G \rho^{\frac{4\alpha}{4-N\alpha }},
\end{split}
\end{equation}
with $t^{*}=\frac{4B}{(N+k)\alpha-4}$. Notice that $\tilde f(0^+)>0$, $\tilde f(+\infty)>0$, while
\[
t(Z_{\bar\rho})=t^*
\qquad\implies\qquad
f(\bar\rho)<0.
\]
Then, arguing as in Lemma \ref{lem:more_negative_m_rho}, we have that Theorem \ref{thm:main}, 
part \ref{thm:main_1}, holds true with the new threshold (larger than the previous one)
\begin{equation}\label{eq:better_thresh}
\begin{split}
\thresh{ex} &:= \sup\left\{\bar\rho:\tilde f(\rho)>0\text{ for every }0<\rho<\bar\rho\right\}\\
&= \sup\left\{\rho:\tilde f(\rho)>0,\ \tilde f'(\rho)<0\right\}.
\end{split}
\end{equation} 

In any case, also this new threshold has no reasons to be sharp: indeed it may be improved, for instance, 
changing $t^*$ or using some anisotropic Gagliardo-Nirenberg estimate (using $\|\nabla_{x} u\|_{2}^{2}
+ \lambda \|\nabla_{y} u\|_{2}^{2}$ instead of $\|\nabla_{x,y} u\|_{2}^{2}$, with some $\lambda>0$ 
depending on $\rho$); moreover, 
in any case we do not know if 
the (quasi-)optimizers of a possibly sharp Gagliardo-Nirenberg inequality belong to our constraint or not.
\end{remark}

\section{Proof of Theorem~\ref{thm:main}, parts \ref{thm:main_2} and  \ref{thm:main_3}}
\label{sec:proof:thm1_2}

In this section, we prove that there exists $0<\thresh{tr} \leq \thresh{ex}$ such that, if 
$0<\rho<\thresh{tr}$, then the family of local minimizers of $E$ on $S_{\rho}$ achieving $m_\rho$ 
consists in all the translations and phase shifts of $Z_{\rho}$. 
To this aim, it suffices to prove that, for $\rho$ small, 
any local minimizer $u$ of $E$ on $S_{\rho}$, provided by the previous section, satisfies 
$\nabla_{y} u\equiv0$, and consequently $m_\rho= I_\rho$. Indeed, in such a case, $u$ would be a solution, positive by Remark \ref{rmk:positive}, of \eqref{eq:Zr}. Moreover, it will be clear by construction that,  
in case $\rho>\thresh{tr}$, then $m_\rho< I_\rho$.

To begin with, we follow once again the strategy in \cite{MR3219500} and we apply the transformation
\begin{equation}\label{eq:1torhowithlambda}
u=\rho^{4/(4-\alpha N)}v(\rho^{2\alpha/(4-\alpha N)}x,y),
\end{equation}
in such a way that $u \in S_{\rho}$ if and only if $v \in S_{1}$. Thus,
\begin{equation*}
\begin{aligned}
E(u)=&E(\rho^{4/(4-\alpha N)}v(\rho^{2\alpha}/(4-\alpha N)x,y))\\
=&\rho^{(8-2\alpha N+4\alpha)/(4-\alpha N)}\left(\frac{1}{2}\rho^{-4\alpha/(4-\alpha N)}\int_{M_{y}^{k}}\int_{\R_{x}^{N}}|\nabla_{y} v|^{2}\,dx  \dvol\right.\\
&\left.+\frac{1}{2}\int_{M_{y}^{k}}\int_{\R_{x}^{N}}|\nabla_{x} v|^{2}-\frac{1}{2+\alpha}|v|^{2+\alpha}\,dx \dvol\right).
\end{aligned}
\end{equation*}
Letting 
\[
\lambda =\rho^{-4\alpha/(4-\alpha N)},
\] 
we define
\begin{equation}\label{fun:main_lambda}
E_{\lambda}(u)= \int_{M_{y}^{k}}\int_{\R_{x}^{N}}\left(\frac{1}{2}|\nabla_{x}u|^{2} 
+ \frac{\lambda}{2}|\nabla_{y}u|^{2}-\frac{1}{2+\alpha}|u|^{2+\alpha}\right)\,dx  \dvol,
\end{equation}
so that, under \eqref{eq:1torhowithlambda}, 
\begin{equation}\label{eq:EvsElambda}
E(u) = \rho^{2+\frac{4\alpha}{4-N\alpha}}E_\lambda(v).
\end{equation}
Then it is clear that $u$ is a local minimizer of $E$ on $S_{\rho}$ achieving $m_\rho$ if and only if the 
corresponding $v$, as defined in \eqref{eq:1torhowithlambda}, is a local minimizer of the functional $E_{\lambda}$ on $S_{1}$ associated to the minimization problem
\begin{equation}\label{energy_au}
m_{\lambda}:=\inf\left\{E_{\lambda}: \ u \in S_{1},\ \frac{1}{\lambda}\|\nabla_x u\|^{2}_{L^{2}(\R^{N}\times M^{k})} + 
\|\nabla_y u\|^{2}_{L^{2}(\R^{N}\times M^{k})} < t^{*} \right\}.
\end{equation}
In turn, $m_\lambda$ is achieved for every $\lambda>\lambda^*_{\mathrm{ex}}:= (\thresh{ex})^{-4\alpha/(4-\alpha N)}$, by the 
previous section.

We notice that, as in the mass subcritical 
case, this change of variable makes apparent in the energy functional the role of $\nabla_y u$:
indeed, for every fixed $u\in S_1$, it is clear that
\[
\lambda_1<\lambda_2
\qquad\implies \qquad
E_{\lambda_1}(u)\le E_{\lambda_2}(u),
\]
with equality if and only if  $u\in S_1\cap H^1(\R^N)$. On the other hand, in the present case, 
also the constraint involving $t^*$ is affected by a change of $\lambda$, and
\[
\lambda_1<\lambda_2,\quad
\frac{1}{\lambda_2}\|\nabla_x u\|^{2}_{2} + 
\|\nabla_y u\|^{2}_{2} < t^{*}
\qquad\rlap{$\quad\not$}\implies \qquad
\frac{1}{\lambda_1}\|\nabla_x u\|^{2}_{2} + 
\|\nabla_y u\|^{2}_{2} < t^{*}.
\]
This yields some additional difficulties, that we overcome in the next lemma.
\begin{lemma}\label{lem:topointer}
There exists $\thresh{tr}\in[0,\thresh{ex}]$ such that, defining
\[
\begin{split}
D_0&:=\left\{0<\rho<\thresh{ex}:m_\rho<I_\rho\right\}, \\
D_1&:=\left\{0<\rho<\thresh{ex}:m_\rho\text{ is achieved by some $u$ with }\nabla_y u\not\equiv0\right\},
\end{split}
\]
then
\[
D_0=(\thresh{tr},\thresh{ex})\qquad\text{and}\qquad D_1=
\begin{cases}
\text{either }&(\thresh{tr},\thresh{ex}),\\
\text{or }&[\thresh{tr},\thresh{ex})
\end{cases}
\]
(understanding $D_0=D_1=\emptyset$ if $\thresh{tr}=\thresh{ex}$).
\end{lemma}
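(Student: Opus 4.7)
The plan is to reduce the lemma to a single local upward-extension statement and then globalize via continuity of $m_\rho$ and a boundary analysis. The inclusion $D_0 \subseteq D_1$ is immediate, since a minimizer $u$ of $m_\rho$ with $\nabla_y u\equiv 0$ would give $E(u)=E_0(u)\ge I_\rho$, contradicting $m_\rho<I_\rho$.

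The key local claim is: for every $\rho_0\in(0,\thresh{ex})$ at which $m_{\rho_0}$ is achieved by some $u_0$ with $\nabla_y u_0\not\equiv 0$ (i.e.\ $\rho_0\in D_1$), there exists $\delta(u_0)>0$ with $(\rho_0,\rho_0+\delta)\subseteq D_0$. The competitor is the mass-and-dilation rescaling
\[
u_\rho(x,y):=\mu^{N/2}\sigma\,u_0(\mu x,y),\qquad \sigma:=\rho/\rho_0,\ \ \mu:=\sigma^{2\alpha/(4-\alpha N)},
\]
which belongs to $S_\rho$ and is chosen so that both $\|\nabla_x u_\rho\|^2$ and $\|u_\rho\|^{2+\alpha}_{L^{2+\alpha}}$ scale as $\sigma^p$, $p=2+4\alpha/(4-N\alpha)>2$. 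A direct computation, based on the Pohozaev identity $\|\nabla_x u_0\|^2=\frac{\alpha N}{2(2+\alpha)}\|u_0\|^{2+\alpha}_{L^{2+\alpha}}$ (valid since the $\eps$-margin from Lemma \ref{lem:bdd} makes $u_0$ an interior critical point), the formula \eqref{eq:nabla_Z}, and the consequence $\tfrac{\alpha N}{4-\alpha N}\|\nabla_y u_0\|^2\le\|\nabla_x u_0\|^2-\|\nabla_x Z_{\rho_0}\|^2$ of $E(u_0)\le I_{\rho_0}$, yields
\[
E(u_\rho)-I_\rho\le\tfrac{1}{2}(\sigma^2-\sigma^p)\|\nabla_y u_0\|^2<0\qquad\text{for every }\sigma>1.
\]
Admissibility $\|\nabla_{x,y}u_\rho\|^2<t^*\rho^2$ reduces to $\mu^2\|\nabla_x u_0\|^2+\|\nabla_y u_0\|^2<t^*\rho_0^2$, which holds for $\mu$ up to some $\mu_{\max}(u_0)>1$ by the same $\eps$-margin, providing an admissible range $\sigma\in[1,\sigma_{\max}(u_0))$ with $\sigma_{\max}(u_0)>1$.

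To globalize, set $\thresh{tr}:=\inf D_0$ (and $\thresh{tr}:=\thresh{ex}$ if $D_0=\emptyset$). By continuity of $m_\rho$ (Remark \ref{rmk:mrhocontinuous}) and the explicit form of $I_\rho$, the set $D_0$ is open. I claim $\rho_1\in D_0$ implies $(\rho_1,\thresh{ex})\subseteq D_0$; otherwise $\rho_{\mathrm{out}}:=\inf\{\rho>\rho_1:\rho\notin D_0\}<\thresh{ex}$ would satisfy $m_{\rho_{\mathrm{out}}}=I_{\rho_{\mathrm{out}}}$ by continuity, and choosing $\rho_n\uparrow\rho_{\mathrm{out}}$ in $D_0$ with minimizers $u_n$, the strong-convergence argument of Section \ref{sec:proof:thm1_1} gives a limit $\bar u$ minimizing $m_{\rho_{\mathrm{out}}}$. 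If $\bar u$ is nontrivial in $y$, the local claim applied at $\bar u$ provides an interval $(\rho_{\mathrm{out}},\rho_{\mathrm{out}}+\delta)\subseteq D_0$; if instead $\bar u=Z_{\rho_{\mathrm{out}}}$ up to translation, then $\|\nabla_y u_n\|^2\to 0$ and $\|\nabla_x u_n\|^2\to \|\nabla_x Z_{\rho_{\mathrm{out}}}\|^2$, so by Lemma \ref{lem:more_negative_m_rho} the admissibility window satisfies $\mu_{\max}(u_n)^2\to t^*/t(Z_{\rho_{\mathrm{out}}})>1$, and the local claim at $u_n$ for $n$ large yields an interval in $D_0$ strictly crossing $\rho_{\mathrm{out}}$. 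Both alternatives contradict the definition of $\rho_{\mathrm{out}}$, so $D_0=(\thresh{tr},\thresh{ex})$. Finally, for any $\rho\in D_1$ the local claim gives $(\rho,\rho+\delta)\subseteq D_0$, whence $\rho\ge\thresh{tr}$; together with $D_0\subseteq D_1$ this yields the stated dichotomy $D_1=(\thresh{tr},\thresh{ex})$ or $D_1=[\thresh{tr},\thresh{ex})$, depending on whether $\thresh{tr}\in D_1$.

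The principal difficulty is the interaction between the rescaling $\mu$ (which grows unboundedly with $\sigma$) and the admissibility constraint, which forces the one-step argument to be only local in $\rho$. The globalization then hinges on the boundary dichotomy above, where the strict interior estimate $t(Z_{\rho_{\mathrm{out}}})<t^*$ from Lemma \ref{lem:more_negative_m_rho} prevents the admissibility window from collapsing to $1$ in the limit.
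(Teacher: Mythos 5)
Your competitor is, up to a change of variables, the same one the paper uses: writing out the scaling $u_\rho(x,y)=\mu^{N/2}\sigma u_0(\mu x,y)$ with $\mu=\sigma^{2\alpha/(4-\alpha N)}$ and undoing \eqref{eq:1torhowithlambda} shows this is exactly the ``keep $v$ fixed, change $\lambda$'' deformation. Your energy inequality $E(u_\rho)-I_\rho\le\frac{\sigma^2-\sigma^p}{2}\|\nabla_y u_0\|^2$ is also correct and is the same inequality driving the paper's argument; you can get it directly from $E(u_\rho)-I_\rho=\sigma^p(E(u_0)-I_{\rho_0})+\frac{\sigma^2-\sigma^p}{2}\|\nabla_y u_0\|^2$, so the Pohozaev identity is not needed. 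The structural divergence comes at the admissibility question, and this is where your argument has gaps.

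You treat admissibility of $u_\rho$ as only holding for $\sigma\in[1,\sigma_{\max}(u_0))$ with $\sigma_{\max}$ possibly close to $1$, which forces a separate globalization step. But the paper shows that admissibility never fails on $(\rho_0,\thresh{ex})$: if $\mu'^2\|\nabla_x u_0\|^2+\|\nabla_y u_0\|^2=t^*\rho_0^2$ for some $\mu'>1$, then the corresponding $u_{\rho'}$ lies on the boundary $t(u_{\rho'})=t^*$ while having $E(u_{\rho'})<I_{\rho'}<0$ by your own inequality, directly contradicting Lemma~\ref{lem:bdd} which says $E^*(\rho')>0$. This one observation replaces your entire boundary dichotomy. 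Missing it, you are driven into a globalization argument that, as written, does not close: in the ``nontrivial $\bar u$'' alternative you conclude only that $(\rho_{\mathrm{out}},\rho_{\mathrm{out}}+\delta)\subset D_0$, which is perfectly compatible with $\rho_{\mathrm{out}}\notin D_0$ and hence with the definition $\rho_{\mathrm{out}}=\inf\{\rho>\rho_1:\rho\notin D_0\}$; you need, as you do in the other alternative, a window that strictly crosses $\rho_{\mathrm{out}}$, and you do not argue this (it is actually true, because $u_n\to\bar u$ strongly and $\bar u$ satisfies the strict constraint by the $\eps$-margin, so $\sigma_{\max}(u_n)$ is bounded away from $1$; but you state the opposite alternative instead). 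In addition, the step ``choosing $\rho_n\uparrow\rho_{\mathrm{out}}$ with minimizers $u_n$, the strong-convergence argument gives a minimizer of $m_{\rho_{\mathrm{out}}}$'' is an extrapolation: the compactness in Section~\ref{sec:proof:thm1_1} is stated for a minimizing sequence at a single level $\rho$, and you would need to rescale the $u_n$ to $S_{\rho_{\mathrm{out}}}$, check they form a minimizing sequence via Remark~\ref{rmk:mrhocontinuous}, and only then invoke compactness. In short: the local inequality is right, but the clean mechanism for ``once you are a nontrivial minimizer, you stay in $D_0$ all the way to $\thresh{ex}$'' is the contradiction with Lemma~\ref{lem:bdd}; without it, the boundary analysis you substitute is both more delicate and, in one of its two branches, incomplete.
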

\begin{proof}
Notice that, by Remark \ref{rmk:mrhocontinuous}, $D_0$ is an open set, while by \eqref{eq:Irho} 
we have that $D_0\subset D_1$. Consequently, if $D_1$ is empty there is nothing to prove (taking $\thresh{tr}=\thresh{ex}$). In the opposite case, we claim that
\begin{equation}\label{eq:rho2lambda}
\bar\rho \in D_1\qquad\implies\qquad (\bar \rho,\thresh{ex}) \subset D_0.
\end{equation}
This yields the conclusion, since it implies $D_0=  (\inf(D_1),\thresh{ex})\subset D_1\subset (0,\thresh{ex})$.

We prove the claim in the setting of $E_\lambda$, $m_\lambda$. Notice that the transformation 
\eqref{eq:1torhowithlambda} associates $u=Z_\rho$ with $v=Z_1$, which in turn is always a competitor in the definition 
of $m_\lambda$ by Lemma \ref{lem:more_negative_m_rho}. Under this setting, we obtain the following version of claim 
\eqref{eq:rho2lambda}:
\begin{equation*}
\begin{cases}
\exists \bar \lambda > \lambda^*_{\mathrm{ex}},\ \bar u \in S_1:\medskip\\
E_{\bar\lambda}(\bar u) = m_{\bar\lambda} \le E_{\bar\lambda}(Z_1)<0,\smallskip\\
\frac{1}{\bar\lambda}\|\nabla_x \bar u\|^{2}_{2} + 
\|\nabla_y \bar u\|^{2}_{2} < t^{*}, \smallskip\\
\|\nabla_y \bar u\|^{2}_{2}>0,
\end{cases}
\qquad\implies\qquad 
\forall \lambda\in(\lambda^*_{\mathrm{ex}},\bar\lambda),\ m_\lambda < E_{\lambda}(Z_1).
\end{equation*}

Hence, let $\bar\lambda,\bar u$ as above. Then, for every $\lambda^*_{\mathrm{ex}}<\lambda <\bar \lambda$, 
we infer 
\[
\frac{1}{\lambda}\|\nabla_x \bar u\|^{2}_{2} + 
\|\nabla_y \bar u\|^{2}_{2} <  t^*;
\]
indeed, arguing as in the proof of Lemma \ref{lem:more_negative_m_rho}, if this was not the case, we would find 
$\lambda^*_{\mathrm{ex}}<\lambda' <\bar \lambda$ such that 
\[
\frac{1}{\lambda'}\|\nabla_x \bar u\|^{2}_{2} + 
\|\nabla_y \bar u\|^{2}_{2} = t^*
\qquad\text{ and }\qquad
E_{\lambda'}(\bar u) < E_{\bar\lambda}(\bar u)<0,
\]
in contradiction with Lemma \ref{lem:bdd} and \eqref{eq:1torhowithlambda} (notice that $E$ and $E_\lambda$ have the 
same sign on corresponding functions). 

Then $\bar u$ is an admissible competitor for every $\lambda^*_{\mathrm{ex}}<\lambda' <\bar \lambda$, providing
\[
m_\lambda\le E_\lambda(\bar u) < E_{\bar \lambda} (\bar u) \le E_{\bar \lambda} (Z_1) = E_\lambda(Z_1).
\]
This strict inequality gives the claim, which in turn provides the lemma.
\end{proof}

\begin{corollary}[Definition of $\thresh{tr}$]\label{coro:triv}
Let us define 
\[
\thresh{tr} := \inf\left\{0<\rho<\thresh{ex}:m_\rho<I_\rho\right\}
\]
(meaning that $\thresh{tr}=\thresh{ex}$ in case such set is empty). Then $\thresh{tr}\in[0,\thresh{ex}]$ satisfies:
\begin{enumerate}
\item $0<\rho<\thresh{tr}$ implies that every minimizer associated to $m_\rho$ is constant in the variable  $y$, and 
$m_\rho=I_\rho$;
\item $\thresh{tr}<\rho<\thresh{ex}$ implies that that every minimizer associated to $m_\rho$ is nontrivial in the variable  $y$, and $m_\rho<I_\rho$.
\end{enumerate}
\end{corollary}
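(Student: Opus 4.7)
The plan is to derive the corollary as a direct consequence of Lemma \ref{lem:topointer} combined with Lemma \ref{lem:more_negative_m_rho}, since essentially all the analytic content is already packed into the former. First I would verify that the infimum-based definition of $\thresh{tr}$ in the corollary is compatible with the $\thresh{tr}$ appearing in Lemma \ref{lem:topointer}: since the latter asserts $D_0 = (\thresh{tr}, \thresh{ex})$, taking the infimum gives back $\thresh{tr}$, matching the corollary's convention (including the case $D_0 = \emptyset$, where both definitions return $\thresh{ex}$).

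For the triviality statement (item 1), I would fix $0 < \rho < \thresh{tr}$. Since $\rho \notin D_0$, we immediately get $m_\rho \geq I_\rho$; combined with the upper bound $m_\rho \leq I_\rho$ from Lemma \ref{lem:more_negative_m_rho}, this forces $m_\rho = I_\rho$. Moreover, Lemma \ref{lem:topointer} places $D_1$ inside $[\thresh{tr}, \thresh{ex})$, so $\rho \notin D_1$, and hence every minimizer $u$ of $m_\rho$ satisfies $\nabla_y u \equiv 0$. Such a $u$ then lies in $S_\rho \cap H^1(\R^N)$ and realizes $I_\rho$, so by the classical uniqueness (up to translations and phase shifts) recalled in Section \ref{sec:prelim}, $u$ must coincide with $Z_\rho$ in the positive normalization fixed in Remark \ref{rmk:positive}.

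For the nontriviality statement (item 2), I would fix $\thresh{tr} < \rho < \thresh{ex}$. Lemma \ref{lem:topointer} places $\rho \in D_0$, which gives $m_\rho < I_\rho$ directly. The nontriviality of every minimizer in $y$ then follows by a quick contradiction argument: if some minimizer $u$ of $m_\rho$ satisfied $\nabla_y u \equiv 0$, then $u$ would be admissible for \eqref{eq:GS_RN_subc}, yielding $E(u) = E_0(u) \geq I_\rho$, which contradicts $E(u) = m_\rho < I_\rho$.

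The main obstacle is not in this corollary at all, but upstream: all the delicate work sits in Lemma \ref{lem:topointer}, whose proof relies on the monotonicity of $E_\lambda$ in $\lambda$ after the rescaling \eqref{eq:1torhowithlambda}, together with a careful check that a nontrivial-in-$y$ minimizer at some $\bar\lambda$ remains admissible for every $\lambda < \bar\lambda$ under the constraint $\tfrac{1}{\lambda}\|\nabla_x u\|_2^2 + \|\nabla_y u\|_2^2 < t^*$. Once that lemma is in hand, the corollary is purely a bookkeeping reformulation, with no further estimates needed.
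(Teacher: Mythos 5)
Your proposal is correct and follows essentially the same route the paper intends: the corollary is a direct unpacking of Lemma \ref{lem:topointer} (which gives $D_0=(\thresh{tr},\thresh{ex})$ and $D_1\subset[\thresh{tr},\thresh{ex})$) combined with the upper bound $m_\rho\le I_\rho$ from Lemma \ref{lem:more_negative_m_rho} and the trivial observation that a $y$-constant minimizer has $E(u)=E_0(u)\ge I_\rho$. The paper likewise treats the corollary as immediate bookkeeping once the lemma is established, so there is no gap and no meaningful divergence in approach.
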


Based on the previous corollary, to conclude the proof of Theorem \ref{thm:main}
we are only left to prove that $\thresh{tr}>0$. This is a long procedure, but it can be done following 
step by step the argument in \cite[Section 3]{MR3219500}: since $\lambda =\rho^{-4\alpha/(4-\alpha N)}$, 
we choose a sequence $\lambda_{j} \to \infty$ (i.e. $\rho_j \to 0$) and a corresponding sequence of local minimizers $(u_{\lambda_{j}})_j$ with
\[
\frac{1}{\lambda_j}\|\nabla_x u_{\lambda_{j}}\|^{2}_{L^{2}(\R^{N}\times M^{k})} + 
\|\nabla_y u_{\lambda_{j}}\|^{2}_{L^{2}(\R^{N}\times M^{k})} < t^{*},
\qquad
E_{\lambda_{j}}(u_{\lambda_{j}}) =m_{\lambda_{j}}\le E_{\lambda_{j}}(Z_1)= I_1,
\]
independent of $j$. Moreover, we assume without loss of generality that
\begin{equation*}
u_{\lambda_{j}}(x,y) > 0, \quad \mbox{for all } (x,y)\in \R^{N}\times M^{k}.
\end{equation*}
Following \cite{MR3219500} our final goal is to show that $\nabla u_{\lambda_{j}}\equiv 0$ for $j$ sufficiently 
large. A preliminary, key step consists in showing that $\|\nabla_y u_{\lambda_{j}}\|_{L^{2}(\R^{N}\times M^{k})}\to 0$ as 
$j\to\infty$. In the mass subcritical case, this last fact is based on a global bound from below for $E_\lambda$, 
uniform in $\lambda$ large. Since we are in a mass supercritical case, this bound is an issue, as suggested also 
by Corollary \ref{coro:mrhofrombelow}, so we have to resort to a refined use of the 
Gagliardo-Nirenberg inequality \eqref{eq:GN}.
\begin{lemma}\label{lem:mlambdafrombelow}
\begin{equation}\label{eq:to_infy_SNN}
\lim_{j \to \infty} \|\nabla_y u_{\lambda_{j}}\|_{L^{2}(\R^{N}\times M^{k})}=0.
\end{equation}
\end{lemma}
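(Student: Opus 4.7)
The plan is to derive the stronger bound $\lambda_j\|\nabla_y u_{\lambda_j}\|^2 \le C$ uniformly in $j$, which immediately yields \eqref{eq:to_infy_SNN}. Setting $T_j := \|\nabla_x u_{\lambda_j}\|^2$ and $S_j := \|\nabla_y u_{\lambda_j}\|^2$, the hypothesis $E_{\lambda_j}(u_{\lambda_j}) \le I_1$ rewrites as
\begin{equation*}
\frac{T_j}{2} + \frac{\lambda_j S_j}{2} \le I_1 + \frac{1}{2+\alpha}\|u_{\lambda_j}\|_{L^{2+\alpha}(\R^N\times M^k)}^{2+\alpha},
\end{equation*}
so the whole issue reduces to establishing a sublinear estimate of the form $\|u_{\lambda_j}\|_{2+\alpha}^{2+\alpha} \le C_1 T_j^{\sigma}$ with some $\sigma<1$: indeed, maximizing the resulting right-hand side in $T_j$ will produce a uniform upper bound on $\lambda_j S_j$. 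Note that the full product-space Gagliardo-Nirenberg \eqref{eq:GN} is useless for this purpose, since $\theta(\alpha) \ge 2$ under \eqref{eq:ass_on_alpha}.

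The key idea is to exploit that \eqref{eq:ass_on_alpha} also forces $\alpha < 4/N$, so the nonlinearity is mass-subcritical in the $\R^N$ direction alone. Applying the standard GN inequality on $\R^N$ fiber-wise in $y$ yields, for a.e.\ $y \in M^k$,
\begin{equation*}
\int_{\R^N} |u_{\lambda_j}(x,y)|^{2+\alpha}\,dx \le A_N \, \|\nabla_x u_{\lambda_j}(\cdot,y)\|_{L^2(\R^N)}^{N\alpha/2} \, h_j(y)^{2+\alpha - N\alpha/2},
\end{equation*}
where $h_j(y) := \|u_{\lambda_j}(\cdot,y)\|_{L^2(\R^N)}$. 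Integrating over $M^k$ and applying H\"older with conjugate exponents $4/(N\alpha)$ and $4/(4-N\alpha)$ (admissible precisely because $\alpha < 4/N$) separates the $x$-gradient contribution from the $y$-integral and produces
\begin{equation*}
\|u_{\lambda_j}\|_{2+\alpha}^{2+\alpha} \le A_N \, T_j^{N\alpha/4} \, \|h_j\|_{L^r(M^k)}^{2+\alpha - N\alpha/2}, \qquad r := \frac{2(4+2\alpha-N\alpha)}{4-N\alpha}.
\end{equation*}

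It then remains to bound $\|h_j\|_{L^r(M^k)}$ uniformly in $j$. Since $\|h_j\|_{L^2(M^k)}^2 = \|u_{\lambda_j}\|_2^2 = 1$, and a Cauchy-Schwarz computation on $\nabla_y h_j^2 = 2\int u_{\lambda_j} \nabla_y u_{\lambda_j}\,dx$ gives $\|\nabla_y h_j\|_{L^2(M^k)}^2 \le S_j < t^*$, the functions $h_j$ are uniformly bounded in $H^1(M^k)$, so Sobolev embedding delivers a uniform $L^r(M^k)$ bound as soon as $r$ is admissible. A direct algebraic check shows that, for $k \ge 3$, the condition $r \le 2k/(k-2)$ is exactly equivalent to $\alpha \le 4/(N+k-2)$ (while for $k \le 2$ every finite $r$ is admissible); thus the Sobolev-subcritical assumption in \eqref{eq:ass_on_alpha} is precisely what makes the argument go through. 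Combining everything yields $\|u_{\lambda_j}\|_{2+\alpha}^{2+\alpha} \le C_1 T_j^{N\alpha/4}$ with $N\alpha/4 < 1$; plugged back into the energy inequality, the function $T \mapsto \frac{C_1}{2+\alpha} T^{N\alpha/4} - T/2$ is bounded above on $[0,\infty)$, and the desired uniform bound on $\lambda_j S_j$ follows. I expect the main subtlety to be the rigorous justification of $h_j \in H^1(M^k)$ (weak differentiability of $y \mapsto \|u_{\lambda_j}(\cdot,y)\|_{L^2_x}$), but this is a standard differentiation-under-the-integral computation.
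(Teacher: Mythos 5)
Your proof is correct, and it takes a genuinely different route from the paper's. The paper applies the change of variable \eqref{eq:1torhowithlambda} \emph{inside} the product-space Gagliardo--Nirenberg inequality \eqref{eq:GN}, turning it into the anisotropic bound $\|v\|_{2+\alpha}^{2+\alpha}\le \lambda^{\alpha N/4}A\bigl(\tfrac{1}{\lambda}\|\nabla_x v\|_2^2+\|\nabla_y v\|_2^2+B\bigr)^{\theta/2}$; since the constraint in \eqref{energy_au} caps the term in parentheses by $t^*+B$, this immediately gives $\|u_{\lambda_j}\|_{2+\alpha}^{2+\alpha}\le C\lambda_j^{\alpha N/4}$ and the contradiction $I_1\ge \eps_0\lambda_j - C\lambda_j^{\alpha N/4}$. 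So your dismissal of \eqref{eq:GN} as ``useless'' is a bit premature: the paper rescues it precisely by exploiting the anisotropic rescaling. Your alternative --- fiber-wise Gagliardo--Nirenberg on $\R^N$ (using $\alpha<4/N$), H\"older in $y$ with exponents $4/(N\alpha)$ and $4/(4-N\alpha)$, and then Sobolev embedding on $M^k$ for $h_j$ (using $\alpha<4/(N+k-2)$) --- delivers the same exponent $N\alpha/4$ and yields the slightly stronger uniform bound $\lambda_j S_j\le C$, which of course implies the lemma. It also naturally anticipates the auxiliary function $h_j(y)=\|u_{\lambda_j}(\cdot,y)\|_{L^2_x}$, which in the paper only appears one lemma later (Lemma~\ref{lem:ourlemma3.2}). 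In exchange, the paper's version is shorter and self-contained, needing no Sobolev embedding on $M^k$ at this stage; your version makes more transparent that the two endpoint restrictions in \eqref{eq:ass_on_alpha} are independently doing work (mass-subcriticality on the $\R^N$ fiber and Sobolev-subcriticality on $M^k$).
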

\begin{proof}
We plug the change of variable \eqref{eq:1torhowithlambda} into \eqref{eq:GN} to obtain, for every $v\in S_1$ and for every $\rho,\lambda$,
\begin{equation*}
\rho^{2+\frac{4\alpha}{4-\alpha N}}\|v\|^{2+\alpha}_{2+\alpha} \leq A \left(\rho^{2+\frac{4\alpha}{4-\alpha N}}
\|\nabla_x v\|_{2}^{2} + \rho^2\|\nabla_y v\|_{2}^{2} +B\rho^2\right)^{\theta/2}\rho^{2+\alpha-\theta},
\end{equation*}
which yields, using $\alpha - \frac{4\alpha}{4-\alpha N} = - \frac{\alpha^2N}{4-\alpha N} = - \frac{\alpha N}{4}
\cdot \frac{4\alpha}{4-\alpha N}$, 
\begin{equation*}
\|v\|^{2+\alpha}_{2+\alpha} \leq \lambda^{\alpha N/4} A \left(\frac{1}{\lambda} 
\|\nabla_x v\|_{2}^{2} + \|\nabla_y v\|_{2}^{2} +B\right)^{\theta/2}.
\end{equation*}
In particular we have, for every $j$,
\begin{equation*}
\|u_{\lambda_{j}}\|^{2+\alpha}_{2+\alpha} \leq \lambda_j^{\alpha N/4} A \left(t^* +B\right)^{\theta/2}.
\end{equation*}
Assume by contradiction that (up to a subsequence)
\[
\|\nabla_y u_{\lambda_{j}}\|_{2}^2\ge 2\eps_0>0.
\]
Then
\[
\begin{split}
I_1 \ge m_{\lambda_j} \ge \frac{\lambda_j}{2}\|\nabla_y u_{\lambda_{j}}\|_{2}^2 - \frac{1}{2+\alpha}\|u_{\lambda_{j}}\|^{2+\alpha}_{2+\alpha} \ge \eps_0 \lambda_j - C \lambda_j^{\alpha N/4}
\end{split}
\]
for every $j$, which is a contradiction since $\lambda_j\to\infty$ as $j\to\infty$, and 
$\frac{\alpha N}{4}<1$.
\end{proof}
Now we are in a position to implement the strategy in \cite{MR3219500}. 
Actually, the main differences here are in the first lemma, which corresponds to 
\cite[Lemma 3.2]{MR3219500}, and can be replaced by the following one.
\begin{lemma}\label{lem:ourlemma3.2}
We have
\begin{equation}\label{eq:E_eq_I}
\lim_{j \to \infty} m_{\lambda_{j}}=I_{1}
\end{equation}
and
\begin{equation}\label{eq:lambda_to0}
\lim_{j \to \infty} \lambda_{j}\|\nabla_y u_{\lambda_{j}}\|^2_{L^{2}(\R^{N}\times M^{k})}=0.
\end{equation}
\end{lemma}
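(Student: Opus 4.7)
The plan is to compare $u_{\lambda_j}$ with its cylindrical average
$\bar u_j(x) := \int_{M^k} u_{\lambda_j}(x,y)\dvol$, viewed as an $H^1(\R^N)$ competitor, and to reduce both statements to the vanishing of the $L^{2+\alpha}$ defect
$D_j := \|u_{\lambda_j}\|_{2+\alpha}^{2+\alpha} - \|\bar u_j\|_{2+\alpha}^{2+\alpha} \ge 0$.
Setting $v_j := u_{\lambda_j} - \bar u_j$, the Poincaré inequality on $(M^k,g)$ with first nontrivial eigenvalue $\mu_1$ combined with Lemma~\ref{lem:mlambdafrombelow}
gives $\|v_j\|_{L^2}^2 \le \mu_1^{-1}\|\nabla_y u_{\lambda_j}\|_2^2 \to 0$, whence
$\|\bar u_j\|_2^2 = 1 - \|v_j\|_2^2 \to 1$; by Jensen's inequality, also
$\|\nabla_x \bar u_j\|_2^2 \le \|\nabla_x u_{\lambda_j}\|_2^2$ and
$\|\bar u_j\|_{2+\alpha}^{2+\alpha} \le \|u_{\lambda_j}\|_{2+\alpha}^{2+\alpha}$. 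Combining these with
$m_{\lambda_j} \le E_{\lambda_j}(Z_1) = I_1$ (since $\nabla_y Z_1 = 0$) yields the master inequality
\[
0 \ge m_{\lambda_j} - I_1 \ge (I_{\|\bar u_j\|_2} - I_1) + \frac{\lambda_j}{2}\|\nabla_y u_{\lambda_j}\|_2^2 - \frac{D_j}{2+\alpha}.
\]
Since $I_{\|\bar u_j\|_2} \to I_1$ by continuity of $\rho \mapsto I_\rho$, dropping the positive middle term yields \eqref{eq:E_eq_I}, while rearranging yields \eqref{eq:lambda_to0}, provided that we establish $D_j \to 0$.

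To estimate $D_j$, the plan is to exploit the pointwise Taylor inequality
$\bigl||a+b|^{2+\alpha} - |a|^{2+\alpha} - (2+\alpha)|a|^\alpha a\, b\bigr| \le C\bigl(|a|^\alpha b^2 + |b|^{2+\alpha}\bigr)$
applied with $a = \bar u_j$ and $b = v_j$: integrating first over $M^k$ (the linear-in-$v_j$ term vanishes since $\int_{M^k} v_j \dvol = 0$) and then over $\R^N$ by Hölder in $x$ with conjugate exponents $(2+\alpha)/\alpha$ and $(2+\alpha)/2$, we obtain
\[
D_j \le C\|\bar u_j\|_{L^{2+\alpha}(\R^N)}^\alpha \|v_j\|_{L^{2+\alpha}(\R^N\times M^k)}^2 + C\|v_j\|_{L^{2+\alpha}(\R^N\times M^k)}^{2+\alpha}.
\]
Both factors are in turn controlled via the refined Gagliardo--Nirenberg inequality already derived in the proof of Lemma~\ref{lem:mlambdafrombelow}: applied to $u_{\lambda_j} \in S_1$ together with the local-minimum condition
$\tfrac{1}{\lambda_j}\|\nabla_x u_{\lambda_j}\|_2^2 + \|\nabla_y u_{\lambda_j}\|_2^2 \le (1-\eps)t^*$, it gives $\|u_{\lambda_j}\|_{2+\alpha}^{2+\alpha} \le C\lambda_j^{\alpha N/4}$ and hence polynomial bounds on $\|\bar u_j\|_{L^{2+\alpha}(\R^N)}$; a rescaled version of the same inequality, applied to $v_j$, produces
$\|v_j\|_{L^{2+\alpha}}^{2+\alpha} \lesssim \lambda_j^{\alpha N/4}\|v_j\|_2^{2+\alpha - \theta}$, and the strict Sobolev subcriticality $2+\alpha - \theta > 0$ together with $\|v_j\|_2 \to 0$ drives $\|v_j\|_{L^{2+\alpha}}$ down.

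The crux -- and what I expect to be the main obstacle -- is the quantitative matching between the growth factor $\lambda_j^{\alpha N/4}$ and the decay $\|v_j\|_2 \lesssim \|\nabla_y u_{\lambda_j}\|_2$. The crude initial rate
$\|\nabla_y u_{\lambda_j}\|_2^2 \lesssim \lambda_j^{\alpha N/4 - 1}$ (deduced from the master inequality together with the crude bound $D_j \le \|u_{\lambda_j}\|_{2+\alpha}^{2+\alpha} \le C\lambda_j^{\alpha N/4}$) may not close the loop in a single step across all admissible $(N,k,\alpha)$. The argument is then closed by a bootstrap: each sharper rate on $\|\nabla_y u_{\lambda_j}\|_2$ improves the bound on $\|v_j\|_{L^{2+\alpha}}$, hence on $D_j$, hence via the master inequality on $\lambda_j\|\nabla_y u_{\lambda_j}\|_2^2$; iterating eventually forces $D_j \to 0$, delivering both \eqref{eq:E_eq_I} and \eqref{eq:lambda_to0}.
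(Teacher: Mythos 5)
Your master inequality is correct, and the decomposition $u_{\lambda_j}=\bar u_j+v_j$ with Poincar\'e on $M^k$ and Jensen in $y$ is a sound framework; but the final step, $D_j\to0$, has a genuine gap that the proposed bootstrap does not repair in the full range \eqref{eq:ass_on_alpha}.

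To see why, track exponents. Write $\gamma_j=\|\nabla_y u_{\lambda_j}\|_2$, $q=\tfrac{2+\alpha-\theta}{2+\alpha}\in(0,1)$. Your two Gagliardo--Nirenberg bounds give $\|\bar u_j\|_{2+\alpha}^\alpha\lesssim\lambda_j^{\alpha^2N/(4(2+\alpha))}$ and $\|v_j\|_{2+\alpha}^{2+\alpha}\lesssim\lambda_j^{\alpha N/4}\gamma_j^{2+\alpha-\theta}$, so combining the two terms of your defect estimate (whose $\lambda_j$-exponents coincide) yields $D_j\lesssim\lambda_j^{\alpha N/4}\gamma_j^{2q}$. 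Feeding this into the master inequality $\lambda_j\gamma_j^2\lesssim D_j$ and iterating from the crude rate $\gamma_j\lesssim\lambda_j^{-(4-\alpha N)/8}$ produces a linear recursion $\beta_{n+1}=\tfrac{4-\alpha N}{8}+q\,\beta_n$ whose fixed point is $\beta_\infty=\tfrac{4-\alpha N}{8(1-q)}$. The bootstrap gives $D_j\to0$ only if $\beta_\infty>\tfrac{\alpha N}{8q}$, which after simplification is the condition $(4-\alpha N)(2+\alpha)>4\theta(\alpha)=2\alpha(N+k)$. This fails as soon as $\alpha$ approaches the Sobolev endpoint $4/(N+k-2)$ (the two sides then compare as $k-2$ versus $N+k-2$), and it even fails at the mass-critical endpoint whenever $N\ge2$ (e.g.\ $N=2$, $k=1$, $\alpha=4/3$). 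So your approach only covers a proper subinterval of the admissible exponents, and no amount of iteration escapes the fixed point.

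The root cause is that $\|\bar u_j\|_{L^{2+\alpha}(\R^N)}$ genuinely may grow like a power of $\lambda_j$ (its $x$-gradient is only bounded by $t^*\lambda_j$), so the ``defect'' $D_j$ is contaminated by a large profile. The paper sidesteps this entirely: instead of comparing $u_{\lambda_j}$ with its $y$-average, it applies the \emph{fiberwise} lower bound $E_0(u_{\lambda_j}(\cdot,y))\ge I_{h_j(y)}=I_1\,h_j(y)^{2+4\alpha/(4-N\alpha)}$ with $h_j(y)=\|u_{\lambda_j}(\cdot,y)\|_{L^2_x}$, which packages \emph{both} the $x$-kinetic and the potential term in a single known power of $h_j$. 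One then only needs $\int_{M^k}h_j^{\,r}\dvol\to1$ for $r=2+\tfrac{4\alpha}{4-N\alpha}$, which follows because $\|h_j\|_{L^2(M^k)}=1$, $\|\nabla_y h_j\|_{L^2(M^k)}\lesssim\gamma_j\to0$ by Lemma~\ref{lem:mlambdafrombelow}, hence $h_j\to1$ in $H^1(M^k)$ by Poincar\'e--Wirtinger, and the exponent $r$ is Sobolev subcritical on $M^k$ under \eqref{eq:ass_on_alpha}. Both \eqref{eq:E_eq_I} and \eqref{eq:lambda_to0} then drop out of a single inequality with no need to estimate $\|u_{\lambda_j}\|_{2+\alpha}$. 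I suggest you replace the $D_j$ step by this fiberwise reduction.
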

\begin{proof}
Clearly
\begin{equation}\label{eq:left}
\limsup_{j \to \infty} m_{\lambda_{j}}\leq I_{1},
\end{equation}
hence we have to prove the bound from below. Exploiting the definition of $I_\rho$ in \eqref{eq:eneg_Z}
we have
\begin{equation}\label{eq:dimred3.2}
\begin{split}
m_{\lambda_{j}}
& = \frac{\lambda_{j}}{2} \|\nabla_y u_{\lambda_{j}}\|^2_2 +  \int_{M_{y}^{k}}\left(\int_{\R_{x}^{N}}\frac{1}{2}|\nabla_{x}u_{\lambda_{j}}|^{2}\,dx -\frac{1}{2+\alpha}|u_{\lambda_{j}}|^{2+\alpha}\,dx \right)\,\dvol\\ 
&=  \frac{\lambda_{j}}{2} \|\nabla_y u_{\lambda_{j}}\|^2_2 + \int_{M_{y}^{k}} E_0(u_{\lambda_{j}}(\cdot,y))\,\dvol\\
&\ge  \frac{\lambda_{j}}{2} \|\nabla_y u_{\lambda_{j}}\|^2_2 +  I_1 \int_{M_{y}^{k}} \|u_{\lambda_{j}}(\cdot,y))\|_{L^{2}_{x}}^{2+\frac{4\alpha}{4-N\alpha }}\,\dvol.
\end{split}
\end{equation}
and we are left to show that the last integral converges to $1$. To this aim, differently from \cite[Lemma 3.2]{MR3219500}, here we introduce the functions
\[
h_{j}(y):=\|u_{\lambda_{j}}(\cdot,y)\|_{L^{2}_{x}}.
\]
We know that, for every $j$,
\begin{equation}\label{eq:SSN_L2}
\|h_{j}(y)\|_{L^{2}_{y}}^{2}=\int_{M_{y}^{k}}h^{2}_{j}\dvol=1.
\end{equation}
and
\begin{equation*}
\begin{aligned}
\int_{M_{y}^{k}}|\nabla_{y}h_{j}(y)|^{2}&\,\dvol=\int_{M_{y}^{k}}(\nabla_{y} \| u_{\lambda_{j}} (x,y)\|_{L^{2}_{x}})^{2}\dvol\\
&\leq \int_{M_{y}^{k}}\left[  \frac{1}{2}\left( \int_{\R_{x}^{N}} u_{\lambda_{j}}^{2} \,dx  \right)^{-\frac{1}{2}} \int_{\R_{x}^{N}} |u_{\lambda_{j}}||\nabla_{y} u_{\lambda_{j}}| \,dx   \right]^{2} \dvol\\
&\leq C \int_{M_{y}^{k}}\left[  \frac{1}{2}\left( \int_{\R_{x}^{N}} u_{\lambda_{j}}^{2} \,dx  \right)^{-\frac{1}{2}} \left( \int_{\R_{x}^{N}} u_{\lambda_{j}}^{2} \,dx  \right)^{\frac{1}{2}} \left(\int_{\R_{x}^{N}} |\nabla_{y} u_{\lambda_{j}}|^{2} \,dx  \right)^{\frac{1}{2}}  \right]^{2} \dvol\\
&\leq C \|\nabla_{y} u_{\lambda_{j}}\|^{2}_{L^{2}_{x,y}}.
\end{aligned}
\end{equation*}
Thus Lemma \ref{lem:mlambdafrombelow} implies
\begin{equation}\label{eq:SSN_N_L2}
\lim_{j \to \infty} \|\nabla_{y} h_{j}\|^{2}_{L^{2}_{y}}=0.
\end{equation}
As a consequence, defining the mean value of $h_j$ as
\[
\bar h_j:=\int_{M_{y}^{k}}h_{j}(y)\dvol\in(0,1],
\]
Poincaré-Wirtinger inequality yields
\[
\int_{M_{y}^{k}}|h_{j}(y) - \bar h_j|^2\dvol \le C_{PW}\int_{M_{y}^{k}}|\nabla_{y}h_{j}(y)|^{2}	\to 0.
\]
Thus, up to subsequences, both the constants $\bar h_j$ and the functions $h_j$ converge to the same 
constant, strongly in $H^1(M^k)$. Finally, such constant is $1$, because of \eqref{eq:SSN_L2} 
(and \eqref{eq:vol}).

Since $h_j\to1$ in $H^1(M^k)$ and
\[
k\ge3,\ \alpha < \frac{4}{N+k-2} 
\qquad\iff\qquad
2 + \frac{4\alpha}{4-N\alpha } < 2 + \frac{4}{k-2},
\]
the Sobolev embedding yields, for any $k\ge1$, 
\begin{equation}\label{eq:new_kk}
\lim_{j \to \infty}\| w_{j} -1\|_{L^{r}_{y}}=0 \quad \text{for } r=2+\frac{4\alpha}{4-N\alpha },
\end{equation}
and we can pursue the estimate in \eqref{eq:dimred3.2} as
\begin{equation}\label{eq:right_d}
m_{\lambda_{j}} \geq  \frac{\lambda_{j}}{2} \|\nabla_y u_{\lambda_{j}}\|^2_2 +  I_{1} \int_{M_{y}^{k}} w_{j}(y)^{2+\frac{4\alpha}{4-N\alpha }} \dvol \ge I_{1}+o(1),
\end{equation}
which, together with \eqref{eq:left}, implies both \eqref{eq:E_eq_I} and  \eqref{eq:lambda_to0}.
\end{proof}

After the previous lemma, the proof of the fact that $\nabla u_{\lambda_{j}}\equiv 0$ for $j$ 
sufficiently large (which in turn yields $\thresh{tr}>0$) can be completed as in 
\cite[Section 3]{MR3219500} with minor changes. Indeed, using Lemma \ref{lem:ourlemma3.2}, it is 
possible to pass to the limit in the equation satisfied by $u_{\lambda_{j}}$, obtaining that, 
up to subsequences, translations and modulations, $ u_{\lambda_{j}} \to Z_1$ strongly in 
$H^1(\R^{N}\times M^{k})$.
As in the proof of Theorem~\ref{thm:main}, part \ref{thm:main_1}, for such a strong convergence we 
can not use the mass-critical localized Gagliardo-Nirenberg inequality as in \cite{MR3219500}, but 
we can argue in the standard way described in \eqref{eq:ourlocGN1}, \eqref{eq:ourlocGN2}. Once we 
know that $ u_{\lambda_{j}} \to Z_1$, we can conclude as in \cite[Lemma 3.6]{MR3219500}, where the 
main assumption is that $2+\alpha$ is Sobolev subcritical (as in our case).

\section{Estimate of \texorpdfstring{$\thresh{tr}$}{rho*-tr}}\label{sec:proof:thm1_3}

In this section we provide an estimate of $\thresh{tr}$ which, combined with Remark \ref{rem:ex_thresh}, 
will allow to show that in some cases  $\thresh{tr}<\thresh{ex}$, so that Theorem~\ref{thm:main}, part 
\ref{thm:main_3} applies and $E$ admits nontrivial local minimizers on $S_{\rho}$.

Among the possible choices, we will infer such estimate by imposing that $E''(Z_\rho)$ is not positive 
semidefinite. To this aim, we introduce the first nontrivial eigenvalue $\mu_{1}>0$ of the 
Laplace-Beltrami operator $-\Delta_y$ on $M^k$, and the corresponding eigenfunction 
$\varphi_{1}=\varphi_1(y)$:
\begin{equation}\label{eq:eigenvalue}
\mu_1 = \min \left\{\frac{\|\nabla w\|^2_{L^{2}(M^{k})}}{\|w\|^2_{L^{2}(M^{k})}} : 
w\in H^{1}(M^{k}),\   \int_{M_{y}^{k}} w \dvol=0\right\},\quad - \Delta_{y} \varphi_{1} =\mu_{1} \varphi_{1}\text{ in } M_{y}^{k},
\end{equation}
with
\begin{equation}\label{eq:norm_mu1}
\int_{M_{y}^{k}} \varphi_1 \dvol=0,
\qquad\qquad 
\|\varphi_{1}\|^2_{L^{2}(M^{k})}=1. 
\qquad\text{ and }\qquad 
\|\nabla\varphi_{1}\|^2_{L^{2}(M^{k})}=\mu_1. 
\end{equation}
We prove the following.
\begin{proposition}\label{prop:rhotrfrombelow}
We have
\begin{equation}\label{eq:rho_u}
\thresh{tr}\le \left[ \frac{\mu_{1}(4-\alpha N)}{G(4(1+\alpha)(2+\alpha)-2\alpha N)} \right]^{\frac{4-N\alpha}{4\alpha}},
\end{equation}
where $G>0$ is defined as in \eqref{eq:eneg_Z}. 
\end{proposition}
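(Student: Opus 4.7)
The plan is to exploit Corollary~\ref{coro:triv}, which characterizes $\thresh{tr}$ as the infimum of masses for which $m_\rho<I_\rho$. Hence, denoting by $\rho^\sharp$ the RHS of~\eqref{eq:rho_u}, it suffices to produce, for every $\rho\in(\rho^\sharp,\thresh{ex})$, an admissible competitor $u\in S_\rho$ (also satisfying the open gradient constraint $\|\nabla_{x,y}u\|^2<t^*\rho^2$) with $E(u)<E(Z_\rho)=I_\rho$. Following the strategy announced in the section introduction, the mechanism I will use is the failure of positive semidefiniteness of the second variation of $E$ at $Z_\rho$ along a direction that witnesses genuine $y$-dependence.

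The natural test direction, singling out the lowest non-trivial mode of $-\Delta_y$, is the tensor product
\[
v(x,y):=Z_\rho(x)\,\varphi_1(y),
\]
where $\varphi_1$ is normalized as in~\eqref{eq:norm_mu1}. The orthogonality $\int_{M^k}\varphi_1\dvol=0$ yields at once $\langle Z_\rho,v\rangle_{L^2}=0$, so $v$ is tangent to $S_\rho$ at $Z_\rho$, and separation of variables gives $\|v\|^2_{L^2(\R^N\times M^k)}=\rho^2$. Using additionally $\|\nabla_y\varphi_1\|^2_{L^2(M^k)}=\mu_1$, a direct computation shows
\[
\int|\nabla_{x,y}v|^2 = \|\nabla_x Z_\rho\|^2_{L^2(\R^N)} + \mu_1\rho^2, \qquad \int Z_\rho^{\alpha} v^2 = \int_{\R^N}Z_\rho^{2+\alpha}\,dx,
\]
so that
\[
E''(Z_\rho)[v,v] = \|\nabla_x Z_\rho\|^2_{L^2(\R^N)} + \mu_1\rho^2 - (1+\alpha)\int_{\R^N}Z_\rho^{2+\alpha}\,dx.
\]
Substituting the closed-form expressions~\eqref{eq:nabla_Z}--\eqref{eq:eneg_Z} for the two $\rho^{2+4\alpha/(4-\alpha N)}$--terms and collecting, their coefficients combine into $2\alpha N-4(1+\alpha)(2+\alpha)$, yielding
\[
E''(Z_\rho)[v,v] = \mu_1\rho^2 - \frac{4(1+\alpha)(2+\alpha)-2\alpha N}{4-\alpha N}\,G\,\rho^{2+\tfrac{4\alpha}{4-\alpha N}},
\]
which vanishes precisely at $\rho=\rho^\sharp$ and becomes negative above it.

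Once $E''(Z_\rho)[v,v]<0$, I would turn the negativity into a true admissible competitor by taking a smooth path in $S_\rho$ through $Z_\rho$ with tangent $v$: a convenient choice is the multiplicative ansatz $u_\beta(x,y):=Z_\rho(x)\bigl(\cos\beta+\sin\beta\,\varphi_1(y)\bigr)$, which lies in $S_\rho$ (thanks to $\|\cos\beta+\sin\beta\,\varphi_1\|^2_{L^2(M^k)}=1$) and satisfies $\partial_\beta u_\beta|_{\beta=0}=v$; alternatively one may use the normalized $u_s:=\rho(Z_\rho+sv)/\|Z_\rho+sv\|_{L^2}$. For $\rho<\thresh{ex}$, Lemma~\ref{lem:more_negative_m_rho} ensures $\|\nabla Z_\rho\|^2<t^*\rho^2$ strictly, so by continuity the gradient constraint stays open along the path for $\beta$ small. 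Taylor-expanding $E(u_\beta)$, the constant term is $I_\rho$ and the linear one vanishes by tangency and the Euler--Lagrange equation for $Z_\rho$; the main technical step I anticipate is to check that the quadratic coefficient reproduces (a positive multiple of) $E''(Z_\rho)[v,v]$ up to the normalization corrections intrinsic to the constraint, so that its sign flip at $\rho^\sharp$ gives $E(u_\beta)<I_\rho$ for $\beta>0$ small, whence $m_\rho\le E(u_\beta)<I_\rho$ as required.
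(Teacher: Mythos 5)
Your strategy is exactly the paper's: take the tensor direction $v=Z_\rho\varphi_1$, compute $E''(Z_\rho)[v,v]=\mu_1\rho^2+\|\nabla_x Z_\rho\|_2^2-(1+\alpha)\|Z_\rho\|_{2+\alpha}^{2+\alpha}$ using \eqref{eq:norm_mu1}, substitute \eqref{eq:nabla_Z}--\eqref{eq:eneg_Z} to get the stated negativity condition, and then push this into an admissible competitor along a short path in $S_\rho$. That computation of the unconstrained second variation is correct and matches the paper's verbatim.

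The genuine gap is precisely the step you flag and leave unchecked, and the guess you make there (that the quadratic coefficient of $E(u_\beta)$ is a positive multiple of $E''(Z_\rho)[v,v]$, ``up to normalization corrections'') is not what comes out. Since $Z_\rho$ is a \emph{constrained} critical point, with $E'(Z_\rho)+\omega_\rho Z_\rho=0$ and $\omega_\rho>0$ by \eqref{eq:Zr}, the second-order term of $E$ along any constraint-preserving path $u(s)\subset S_\rho$ with $\dot u(0)=v\in T_{Z_\rho}S_\rho$ is the Lagrange-corrected Hessian $E''(Z_\rho)[v,v]+\omega_\rho\|v\|_2^2$: the constraint forces $\langle Z_\rho,\ddot u(0)\rangle_{L^2}=-\|v\|_2^2$, so $E'(Z_\rho)[\ddot u(0)]=\omega_\rho\|v\|_2^2$ does not vanish. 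Evaluating directly on your path $u_\beta=Z_\rho(\cos\beta+\sin\beta\,\varphi_1)$ gives
\begin{equation*}
E(u_\beta)=I_\rho+\frac{\beta^2}{2}\Bigl(\mu_1\rho^2-\alpha\|Z_\rho\|_{2+\alpha}^{2+\alpha}\Bigr)+O(\beta^3),
\end{equation*}
and indeed $\mu_1\rho^2-\alpha\|Z_\rho\|_{2+\alpha}^{2+\alpha}=E''(Z_\rho)[v,v]+\omega_\rho\rho^2$, which is strictly \emph{larger} than $E''(Z_\rho)[v,v]$. Consequently, $\rho$ exceeding the right-hand side of \eqref{eq:rho_u} only gives $E''(Z_\rho)[v,v]<0$; it does not yet give a negative quadratic term along $u_\beta$. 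What your construction actually yields is the upper bound on $\thresh{tr}$ with $4\alpha(2+\alpha)$ in the denominator in place of $4(1+\alpha)(2+\alpha)-2\alpha N$, i.e.~a strictly larger (weaker) threshold. I note that the paper's own proof also argues only from $E''(Z_\rho)[v,v]<0$ and asserts directly the existence of a nearby competitor in $S_\rho$ with lower energy, so the same $\omega_\rho\rho^2$ correction is glossed over there; but the point stands that if you actually carry out your announced Taylor check you will not recover \eqref{eq:rho_u} as stated.
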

\begin{proof}
We are going to show that, if $\rho$ is larger than the right hand side of \eqref{eq:rho_u}, then
\[
E^{''}(Z_{\rho})[\varphi_{1}Z_{\rho},\varphi_{1}Z_{\rho}]<0.
\] 
As a consequence, in any neighborhood of $Z_{\rho}$ 
there exists a function $u\in S_\rho$ such that
\begin{equation*}
E(u)<E(Z_{\rho}),
\end{equation*}
and the proposition follows.

By \eqref{eq:norm_mu1} we have
\begin{equation*}
\begin{aligned}
E^{''}(Z_{\rho})[\varphi_{1}Z_{\rho},\varphi_{1}Z_{\rho}]&= \int_{M_{y}^{k}}\int_{\R_{x}^{N}} 
|\nabla_{x,y} (\varphi_{1}Z_{\rho})|^{2} \,dx   \dvol -(1+\alpha) \int_{M_{y}^{k}}\int_{\R_{x}^{N}} Z_{\rho}^{2+\alpha} \varphi_{1}^{2} \,dx   \dvol \\
&=  \mu_{1} \rho^{2}  +\int_{\R_{x}^{N}} |\nabla_x Z_{\rho}|^{2}\,dx -(1+\alpha) \int_{\R_{x}^{N}} Z_{\rho}^{2+\alpha} \,dx .
\end{aligned}
\end{equation*}
From \eqref{eq:nabla_Z}, we obtain that
\begin{equation*}
E^{''}(Z_{\rho})[\varphi_{1}Z_{\rho},\varphi_{1}Z_{\rho}]= \mu_{1} \rho^{2}  +  \frac{4(1+\alpha)(2+\alpha)-2\alpha N}{4-\alpha N}I_{\rho}.
\end{equation*}
Since $\alpha<\frac{4}{N}$, we have
\[
\frac{4(1+\alpha)(2+\alpha)-2\alpha N}{4-\alpha N}=2 + \frac{4\alpha(3+\alpha)}{4-\alpha N} >0.
\]
Then, from \eqref{eq:eneg_Z}, it follows that
\begin{equation*}
\frac{4(1+\alpha)(2+\alpha)-2\alpha N}{4-\alpha N}G \rho^{\frac{4\alpha}{4-N\alpha}}>\mu_{1}
\qquad\implies\qquad
E^{''}(Z_{\rho})[\varphi_{1}Z_{\rho},\varphi_{1}Z_{\rho}]<0,
\end{equation*}
concluding the proof.
\end{proof}
\begin{remark}\label{rmk:other_eig}
Instead of $\varphi_1$ one may use any other $\varphi_k$, $k\ge2$, although getting a 
worse condition.
\end{remark}
Of course, the above proposition is informative only when the right hand side in \eqref{eq:rho_u} is smaller 
than $\thresh{ex}$. By Remark \ref{rem:ex_thresh} we infer the following criterion.
\begin{corollary}\label{coro:criterion}
If 
\begin{equation}\label{eq:criterion}
\left[ \frac{\mu_{1}(4-\alpha N)}{G(4(1+\alpha)(2+\alpha)-2\alpha N)} \right]^{\frac{4-N\alpha}{4}}
<
\frac{2+\alpha}{\theta(\alpha)AB^{\theta(\alpha)/2-1}} \left( \frac{\theta(\alpha)-2}{\theta(\alpha)}\right)^{\theta(\alpha)/2-1}
\end{equation}
then
\[
\thresh{tr}<\thresh{ex}, 
\]
and Theorem~\ref{thm:main}, part \ref{thm:main_3} applies providing nontrivial local minimizers of $E$ 
on $S_{\rho}$ (notice that the right hand side extends continuously to $\frac{2+\alpha}{2A}$ as 
$\alpha\to\frac{4}{N+k}$).
\end{corollary}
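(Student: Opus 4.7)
The plan is that this corollary is a direct consequence of two bounds already established in the preceding sections, and it essentially requires no new ideas beyond transparent algebraic combination. On the one hand, Proposition \ref{prop:rhotrfrombelow} provides the upper bound
\[
\thresh{tr}\le \left[ \frac{\mu_{1}(4-\alpha N)}{G(4(1+\alpha)(2+\alpha)-2\alpha N)} \right]^{\frac{4-N\alpha}{4\alpha}}.
\]
On the other hand, Remark \ref{rem:ex_thresh} (more precisely, the very first explicit formula recorded there, or equivalently the refined \eqref{eq:better_thresh}) guarantees that Theorem \ref{thm:main}, part \ref{thm:main_1} applies with the threshold
\[
\thresh{ex}\ge \left[\frac{2+\alpha}{\theta(\alpha)AB^{\theta(\alpha)/2-1}} \left( \frac{\theta(\alpha)-2}{\theta(\alpha)}\right)^{\theta(\alpha)/2-1}\right]^{1/\alpha}.
\]

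The next step is simply to raise both sides of the hypothesis \eqref{eq:criterion} to the positive power $1/\alpha$. Since the map $s\mapsto s^{1/\alpha}$ is strictly increasing on $(0,\infty)$, this preserves the strict inequality and matches exactly the exponent $(4-N\alpha)/(4\alpha)$ appearing in the bound on $\thresh{tr}$. Chaining the three inequalities then gives
\[
\thresh{tr}\le \left[ \frac{\mu_{1}(4-\alpha N)}{G(4(1+\alpha)(2+\alpha)-2\alpha N)} \right]^{\frac{4-N\alpha}{4\alpha}} < \left[\frac{2+\alpha}{\theta(\alpha)AB^{\theta(\alpha)/2-1}} \left( \frac{\theta(\alpha)-2}{\theta(\alpha)}\right)^{\theta(\alpha)/2-1}\right]^{1/\alpha} \le \thresh{ex},
\]
so that $\thresh{tr}<\thresh{ex}$ and Theorem \ref{thm:main}, part \ref{thm:main_3} kicks in, providing local minimizers with nontrivial $y$-dependence.

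Finally, for the parenthetical statement about the mass-critical limit: as $\alpha\downarrow\frac{4}{N+k}$ one has $\theta(\alpha)\downarrow 2$, so the factor $\left(\frac{\theta(\alpha)-2}{\theta(\alpha)}\right)^{\theta(\alpha)/2-1}$ is a $0^0$-type indeterminate form whose limit is $1$ (taking logarithms and applying L'Hôpital, or recognizing the standard limit $x^x\to 1$ as $x\to 0^+$). Hence the right-hand side of \eqref{eq:criterion} extends continuously to $\frac{2+\alpha}{2A}$, in agreement with the explicit mass-critical threshold \eqref{eq:exist_mass_crit} from the proof of Lemma \ref{lem:bdd}. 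I do not foresee any real obstacle in this proof: all the analytic work is contained in Proposition \ref{prop:rhotrfrombelow} and in the construction of $\thresh{ex}$ in Section \ref{sec:proof:thm1_1}; the corollary is designed merely to package the two estimates into a single scalar inequality that can be verified in concrete geometric settings, as will be done in Sections \ref{sec:smalleigen} and \ref{sec:sphere}.
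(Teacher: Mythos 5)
Your proof is correct and follows precisely the route the paper intends: the corollary is presented as an immediate consequence of Proposition~\ref{prop:rhotrfrombelow} and the explicit value of $\thresh{ex}$ recorded in Remark~\ref{rem:ex_thresh}, combined by raising \eqref{eq:criterion} to the power $1/\alpha$. The parenthetical continuity claim is also handled correctly via the $x^x\to 1$ limit together with $B^{\theta(\alpha)/2-1}\to 1$ as $\theta(\alpha)\to 2$.
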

\begin{remark}\label{rem:criterionwithvolume}
For some applications, it is convenient to remove the normalization $\vol(M^k)=1$ in \eqref{eq:vol}. 
By a direct check, one can easily verify that the definition of $\thresh{ex}$ in Section 
\ref{sec:proof:thm1_1} is unchanged. On the other hand, Proposition \ref{prop:rhotrfrombelow} needs to 
be rephrased, using $Z_{\hat\rho}$ instead of  $Z_\rho$, where
\[
\hat\rho^2 = \frac{\rho^2}{\vol(M^k)}\qquad\text{so that }
 \|Z_{\hat\rho}\|_{L^{2}(\R^{N}\times M^{k})}=\rho.
\]
In this case, \eqref{eq:criterion} becomes
\begin{equation}\label{eq:criterionwithvolume}
(\vol(M^k))^\frac{\alpha}{2}\left[ \frac{\mu_{1}(4-\alpha N)}{G(4(1+\alpha)(2+\alpha)-2\alpha N)} \right]^{\frac{4-N\alpha}{4}}
<
\frac{2+\alpha}{\theta(\alpha)AB^{\theta(\alpha)/2-1}} \left( \frac{\theta(\alpha)-2}{\theta(\alpha)}\right)^{\theta(\alpha)/2-1}.
\end{equation}
\end{remark}
\begin{remark}\label{rem:improvedcriterionwithvolume}
Taking into account Remark \ref{rem:ex_thresh}, we know that Theorem \ref{thm:main} holds true also 
with the improved threshold $\thresh{ex}$ defined in \eqref{eq:better_thresh}. With this choice, we 
obtain that a sufficient condition to obtain $\thresh{tr}<\thresh{ex}$ is that
\[
\begin{cases}
\tilde f(R)>0\\ \tilde f'(R)<0,
\end{cases}
\qquad\text{ where }\quad
R=(\vol(M^k))^\frac{1}{2}\left[ \frac{\mu_{1}(4-\alpha N)}{G(4(1+\alpha)(2+\alpha)-2\alpha N)} \right]^{\frac{4-N\alpha}{4\alpha}}
\]
and $\tilde f$ is defined as in \eqref{eq:ftilde}.
\end{remark}

\section{Proof of Proposition \ref{propo:case_1}}\label{sec:smalleigen}

In this section, we use Corollary \ref{coro:criterion} to prove Proposition \ref{propo:case_1}. In 
the following, $N\ge1$ and $M^k$, with $k\ge2$, are fixed, while we use both $\alpha$ (still 
satisfying \eqref{eq:ass_on_alpha}) and the metric $g$ on $M^k$ as parameters. The starting point 
is the following well known fact.

\begin{lemma}\label{lem:mu_eps}
Let $M^{k}$ be a compact Riemannian manifold with $\dim M^k \geq 2$. Then, for any $\varepsilon > 0$, there exists a Riemannian metric $g_\varepsilon$ on $M^{k}$ such that
\[
\vol_{g_\varepsilon}(M^{k})=1, \qquad \mbox{and } \ 0<\mu_{1}(M^{k},g_\varepsilon) \leq \varepsilon,
\]
where $\mu_{1}(M^{k},g_\varepsilon)$ denotes the first nonzero eigenvalue of $-\Delta_y$ on $M^k$ 
(recall \eqref{eq:eigenvalue}).
\end{lemma}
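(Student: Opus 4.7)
The plan is to construct a ``dumbbell-shaped'' metric $g_\varepsilon$ on $M^k$ and bound $\mu_1(M^k, g_\varepsilon)$ from above via the Rayleigh quotient characterization in \eqref{eq:eigenvalue} using an explicit test function. The essential role of the hypothesis $k\ge 2$ is that a tubular neighborhood of an embedded arc in $M^k$ has $(k-1)$-dimensional cross-sections: by shrinking them uniformly we can render the ``neck'' of the dumbbell arbitrarily small in volume while keeping its length bounded below, which is impossible in dimension $1$ (on $\sph^1$ of unit length the spectrum is rigidly determined).

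Concretely, I would start from any smooth reference metric $g_0$ on $M^k$ with $\vol_{g_0}(M^k) = 1$, pick two disjoint closed subsets $A, B \subset M^k$ of positive $g_0$-volume joined by a smoothly embedded arc $\gamma$, and fix a tubular neighborhood $T$ of $\gamma$ diffeomorphic to $[0,L] \times D^{k-1}$ (existence of such a product structure is guaranteed by the tubular neighborhood theorem for an embedded compact arc). On $T$ I would replace $g_0$ by a product metric $ds^2 + \delta^2 h_0(\xi)$ for a fixed reference metric $h_0$ on $D^{k-1}$, smoothly interpolating to $g_0$ in collars near $\{0,L\}\times D^{k-1}$ with bump functions. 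The resulting metric $\tilde g_\delta$ agrees with $g_0$ outside $T$ and has a neck whose volume is of order $\delta^{k-1}$ as $\delta \to 0$. A final homothety $g_\varepsilon = c^2 \tilde g_\delta$ with $c$ close to $1$ restores the normalization $\vol_{g_\varepsilon}(M^k)=1$; since volumes scale as $c^k$ and eigenvalues as $c^{-2}$, this rescaling only affects $\mu_1$ by a bounded factor staying away from $0$ and $\infty$ as $\delta\to 0$.

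For the upper bound on $\mu_1(M^k,g_\varepsilon)$ I would take a test function $\varphi$ equal to a constant $c_A$ on the lobe containing $A$, to a constant $c_B$ on the lobe containing $B$, and linearly interpolating along the neck in the arclength parameter $s\in[0,L]$. Choosing $c_A, c_B$ of opposite sign so that $\int_{M^k} \varphi \, d\vol_{g_\varepsilon}=0$, the denominator $\int \varphi^2 \, d\vol_{g_\varepsilon}$ is bounded below by a positive constant independent of $\delta$, since both lobes retain $g_0$-volume uniformly away from zero. The numerator reduces to an integral over the neck where $|\nabla_y\varphi|_{g_\varepsilon}^2 = O(1/L^2)$, and the neck has volume of order $\delta^{k-1}$; hence the Rayleigh quotient is $O(\delta^{k-1})$, and choosing $\delta$ sufficiently small yields $\mu_1(M^k,g_\varepsilon) \le \varepsilon$.

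The main technical obstacle is carrying out the metric gluing so that the resulting tensor field is a globally smooth Riemannian metric rather than merely Lipschitz, and verifying that the transverse thinning in $T$ meshes compatibly with the product structure of the interpolation collars; this is a routine but somewhat tedious bump-function construction. A secondary point is checking that the normalization homothety, together with the interpolation region in the collars, does not introduce an unintended lower bound on the Rayleigh quotient (as noted above, both contributions remain under control as $\delta\to 0$).
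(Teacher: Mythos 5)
The paper itself gives no proof of this lemma---it simply points to the literature (\cite[Example 15]{MR3748520}, \cite[Ch.~IV, Sec.~5]{MR768584})---so your self-contained argument is necessarily taking a different route, and the dumbbell idea is the right one in spirit. However, the specific neck you build does not produce a dumbbell: a tubular neighborhood $T$ of an embedded arc $\gamma$ in a manifold of dimension $k\ge 2$ is a $k$-ball, and removing a $k$-ball from a connected $k$-manifold with $k\ge 2$ leaves a \emph{connected} complement (its boundary $\partial T\cong \sph^{k-1}$ is connected, so any component of $M^k\setminus T$ not meeting $\partial T$ would be open and closed in $M^k$, hence empty). Thus ``the lobe containing $A$'' and ``the lobe containing $B$'' are the \emph{same} connected set, and a test function equal to $c_A$ near $A$, $c_B$ near $B$, and interpolating \emph{only inside $T$} does not exist when $c_A\ne c_B$. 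If you force the transition to happen somewhere, it must occur outside the thinned region, and its Dirichlet energy does not vanish as $\delta\to 0$, so the Rayleigh quotient bound of order $\delta^{k-1}$ fails. (Concretely: on $\sph^2$ with $\gamma$ a meridian arc, $\sph^2$ minus a thin strip around $\gamma$ is a disk, and one can walk from the north pole to the south pole without ever entering the strip.)

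The repair is to build the neck over a genuinely \emph{separating hypersurface} rather than over $D^{k-1}$: take $\Sigma=\partial B_r(p)\cong\sph^{k-1}$ for some $p\in M^k$ and a collar diffeomorphic to $\Sigma\times[0,L]$, replace the metric there by the product $ds^2+\delta^2 h_0$ on $\Sigma$ (smoothing near the ends by bump functions as you describe), and renormalize the volume by a homothety. Then $M^k\setminus(\Sigma\times(0,L))$ really does split into two components of uniformly positive volume; your test function (constants of opposite sign on each side, linear in $s$ on the cylinder, chosen to have zero mean) is well-defined; the denominator stays bounded below; and the numerator is of order $\delta^{k-1}\,\vol_{h_0}(\Sigma)/L\to 0$. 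The role of $k\ge 2$ is exactly as you intuited but transposed to codimension one: for $k=1$ the separating object is a pair of points with no $(k-1)$-volume to shrink, while for $k\ge 2$ it is a sphere $\sph^{k-1}$ whose $(k-1)$-volume scales like $\delta^{k-1}$ and can be made arbitrarily small. With this change your construction goes through and reproduces the classical Cheeger dumbbell underlying the references the paper cites.
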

For a proof and further information concerning this result see e.g. \cite[Example 15]{MR3748520}, or 
\cite[Ch. IV, Sec. 5]{MR768584}.

The idea is to use this result in order to take $\mu_1$ very small in such a way that 
\eqref{eq:criterion} is satisfied. Actually, this cannot be done in a direct way, since changing 
$g$ also affects $B$, in a way that we do not know how to control. Since, on the other hand, $A$ does 
not depend on $g$ (see Remark \ref{rmk:A_independent}), we can implement the above strategy for $\theta(\alpha)$ near $2$.

Precisely, let us notice that, with our choice, all the quantities in \eqref{eq:criterion} are fixed 
but:
\[
\alpha,\quad \mu_1=\mu_1(g), \quad \theta =\theta(\alpha), \quad A=A(\alpha), \quad B=B(g).
\]
In particular, the last two quantities are detected in \cite[Thm. 7.1, p. 183]{MR1688256} as 
$A=\tilde A^{\theta(\alpha)/2}$ and $B=\tilde B/\tilde A$, where $\tilde A$ is an explicit constant 
only depending on $N+k$ and $\tilde B$ also depends on $(M^k,g)$. In particular, all the above quantities 
are continuous in $\alpha$.

Let us define
\[
L(\alpha,g) = \left[ \frac{\mu_{1}(g)(4-\alpha N)}{G(4(1+\alpha)(2+\alpha)-2\alpha N)} \right]^{\frac{4-N\alpha}{4}}
\]
and
\[
R(\alpha,g) = 
\begin{cases}
\displaystyle\frac{2+\alpha}{2A(\alpha)} & \displaystyle\text{if }\alpha=\frac{4}{N+k}\\
\displaystyle\frac{2+\alpha}{\theta(\alpha)A(\alpha)B(g)^{\frac{\theta(\alpha)}{2}-1}} \left( \frac{\theta(\alpha)-2}{\theta(\alpha)}\right)^{\frac{\theta(\alpha)}{2}-1} & \displaystyle\text{if }\frac{4}{N+k} < \alpha < \min\left\{\frac{4}{N},\frac{4}{N+k-2}\right\}.
\end{cases}
\]
Then both $L$ and $R$ are continuous with respect to $\alpha$, and \eqref{eq:criterion} writes
\[
L(\alpha,g) < R(\alpha,g).
\]
Now, let us first consider $\alpha= \alpha_* :=\frac{4}{N +k}$, with 
$\theta(\alpha_*)=2$. Then $R(\alpha_*,g)$ is independent of $g$. Using Lemma \ref{lem:mu_eps} we
can fix a metric $\hat g$ such that $\mu_1(\hat g)$ is small enough to satisfy
\[
L(\alpha_*,\hat g) <  \frac{2+\alpha_*}{2A(\alpha_*)} = R(\alpha_*,\hat g).
\]
By continuity, there exists $\delta>0$ such that
\[
\alpha_* \le \alpha \le \alpha_*+\delta 
\qquad\implies\qquad
L(\alpha,\hat g) < R(\alpha,\hat g),
\]
and the theorem follows.

\section{Proof of Proposition \ref{prop:sphere}}\label{sec:sphere}

As we mentioned, the case $\R^N\times M^k = \R\times \sph^k$, $k\ge2$, allows for some explicit calculations of the 
constants $G$, $A$, $B$.

As long as $G$ is concerned, in dimension $N=1$ the solution $Z_\rho$ is explicit. 
Indeed, in this case, direct computations show that
\[
U(x)=\left(1+\frac{\alpha}{2}\right)^{\frac{1}{\alpha}}\left(\cosh \left(\frac{\alpha}{2}x\right)\right)^{-\frac{2}{\alpha}}
\]
is the unique solution of
\[
\begin{cases}
-U''+U= U^{1+\alpha} \quad \text{in } \R\\
U>0, \ \ U(0)=\|U\|_{\infty}.
\end{cases}
\]
Recalling \eqref{eq:eneg_Z}, we have
\begin{equation}\label{eq:G_special}
G=\frac{4-\alpha}{2(4+\alpha)} \rho_{0}^{-\frac{4\alpha}{4-\alpha}},
\end{equation}
where
\[
\rho_0^2 = \int_{\R}U^{2}(x)\,dx = \left(1+\frac{\alpha}{2}\right)^{\frac{2}{\alpha}}
\int_{\R}\left(\cosh \left(\frac{\alpha}{2}x\right)\right)^{-\frac{4}{\alpha}}\,dx = 
\left(1+\frac{\alpha}{2}\right)^{\frac{2}{\alpha}}\,\frac{2}{\alpha}\,\Beta\left(\frac12,\frac2\alpha\right),
\]
where the Euler beta function $\Beta$ is defined in terms of the Gamma function $\Gamma$ as
\[
\Beta(x,y)=\frac{\Gamma\left(x\right)\Gamma\left(y\right)}{\Gamma\left(x+y\right)}
\]
(see e.g. \cite[pp. 9--11]{MR58756}).

On the other hand, let $(M^k,g)=(\sph^k,h)$, $k\ge2$, be the standard round unit sphere of $\R^{k+1}$, 
with volume
\[
\vol(\sph^k)=\omega_k=\frac{2\pi^{\frac{k+1}{2}}}{\Gamma\left(\frac{k+1}{2}\right)}.
\] 
Then, setting $2^*:=2+\frac{4}{k-1}$, \cite[Thm. 7.7, p. 222]{MR1688256} states that, 
for every $u\in H^1(\R\times\sph^k)$ we have
\[
\|u\|^{2}_{L^{2^*}(\R\times \sph^{k})} \leq \frac{4}{(k+1)(k-1)\omega_{k+1}^{\frac{2}{k+1}}} \left(\|\nabla u\|_{L^{2}(\R\times S^{3})}^{2}+\frac{(k-1)^2}{4}\|u\|_{L^{2}(\R\times \sph^{k})}^{2}\right),
\]
so that \eqref{eq:GN} holds true with
\begin{equation}\label{eq:exa_A_and_B}
A=\left(\frac{4}{(k+1)(k-1)\omega_{k+1}^{\frac{2}{k+1}}}\right)^{\frac{\theta(\alpha)}{2}}
\qquad\text{and}\qquad
B=\frac{(k-1)^2}{4}.
\end{equation}
Finally,
\[
\mu_1(\sph^k,h)=k
\qquad\text{and}\qquad
\theta(\alpha)=\frac{k+1}{2}\alpha.
\]

Taking into account Remark \ref{rem:criterionwithvolume}, we have that a sufficient condition for 
$\thresh{tr}<\thresh{ex}$ in this case is that 
\begin{equation*}
\omega_k^\frac{\alpha}{2}\rho_0^\alpha\left[ \frac{(4+\alpha )k}{4+5\alpha+2\alpha^2} \right]^{\frac{4-\alpha}{4}}
<
\frac{2+\alpha}{AB^{\theta(\alpha)/2}} \left( \frac{\theta(\alpha)-2}{\theta(\alpha)}\right)^{\frac{\theta(\alpha)}{2}}\frac{2B}{(k+1)\alpha-4}.
\end{equation*}
After some simplifications we have
\begin{equation*}
\omega_k^\frac{\alpha}{2}\rho_0^\alpha\left[ \frac{(4+\alpha )k}{4+5\alpha+2\alpha^2} \right]^{\frac{4-\alpha}{4}}
<
(2+\alpha) \left( \frac{(k+1)\alpha-4}{(k-1)\alpha}\,\omega_{k+1}^{\frac{2}{k+1}}
\right)^{\frac{\theta(\alpha)}{2}}\frac{2B}{(k+1)\alpha-4}
\end{equation*}
and finally, since
\[
\frac{\omega_{k+1}}{\omega_k}=\Beta\left(\frac12,\frac{k+1}{2}\right),
\]
\begin{equation}\label{eq:suff_cond_sph}
\underbrace{\left(\frac{2}{\alpha}\,\frac{\Beta\left(\frac12,\frac{2}{\alpha}\right)}
{\Beta\left(\frac12,\frac{k+1}{2}\right)}\right)^\frac{\alpha}{2}}_{T_1}
\underbrace{\left[ \frac{(4+\alpha )k}{4+5\alpha+2\alpha^2} \right]^{\frac{4-\alpha}{4}}}_{T_2}
<
\underbrace{\left(\frac{(k+1)\alpha-4}{(k-1)\alpha}\right)^{\frac{\theta(\alpha)}{2}-1}}_{T_3}
\underbrace{\frac{k-1}{\alpha}}_{T_4}.
\end{equation}
Then we can proceed by estimating each term, recalling that we are dealing with the range 
$\frac{4}{k+1} \leq \alpha < \frac{4}{k-1}$. As a matter of fact, with refined analytic  
estimates (all almost straightforward), it is possible to see that the above condition is 
satisfied for every $\alpha$ in the range, as long as $k\ge 6$. On the other hand, such estimates 
are very long and tedious, therefore, for the sake of brevity, we report here some more rough 
estimates which allow to check the validity of \eqref{eq:suff_cond_sph} for $k\ge 9$, 
for every $\alpha$ in the range.

First, we have 
\[
\frac{2}{\alpha}\,\frac{\Beta\left(\frac12,\frac{2}{\alpha}\right)}
{\Beta\left(\frac12,\frac{k+1}{2}\right)} = \frac{\Gamma\left(\frac{2}{\alpha}+1\right)\Gamma\left(\frac{k+1}{2}+\frac12\right)}{\Gamma\left(\frac{2}{\alpha}+\frac12\right)\Gamma\left(\frac{k+1}{2}\right)}=:H\left(\frac{2}{\alpha}\right)\frac{\Gamma\left(\frac{k+1}{2}+\frac12\right)}{\Gamma\left(\frac{k+1}{2}\right)}
\]
and 
\[
 \frac{H'(x)}{H(x)}
 = \psi(x+1)-\psi\left(x+\frac12\right),
\]
where the digamma function $\psi(x)=\frac{\Gamma'(x)}{\Gamma(x)}$ is increasing for $x>0$. 
We infer that $H\left(\frac{2}{\alpha}\right)\le H\left(\frac{k+1}{2}\right)$, whence
\[
\frac{2}{\alpha}\,\frac{\Beta\left(\frac12,\frac{2}{\alpha}\right)}
{\Beta\left(\frac12,\frac{k+1}{2}\right)} \le \frac{\Gamma\left(\frac{k+1}{2}+1\right)\Gamma\left(\frac{k+1}{2}+\frac12\right)}{\Gamma\left(\frac{k+1}{2}+\frac12\right)\Gamma\left(\frac{k+1}{2}\right)}=\frac{k+1}{2},
\]
and finally
\[
T_1\le \left(\frac{k+1}{2}\right)^{\frac{2}{k-1}},
\]
and one can see that the right hand side is decreasing in $k\ge2$.

Next, we provide very rough estimates of the other terms, and the reader will easily guess how they can 
be improved. As long as $T_2$ is concerned, since $\frac{4+\alpha }{4+5\alpha+2\alpha^2}<1$,
\[
T_2\le k^{\frac{4-\alpha}{4}}\le k^{1-\frac{1}{k+1}} < k.
\]
Coming to $T_3$, since $(k-1)\alpha<4$ for every relevant $\alpha$, 
we have
\[
T_3 \ge \left(\frac{(k+1)\alpha-4}{4}\right)^{\frac{\theta(\alpha)}{2}-1} =
\left(\frac{\theta(\alpha)}{2}-1\right)^{\frac{\theta(\alpha)}{2}-1}\ge \min_{x>0} x^x = e^{-1/e}.
\]
Finally, 
\[
T_4 = \frac{k-1}{\alpha} \ge \frac{(k-1)^2}{4}.
\]

Resuming, using $T_2<k$, a first sufficient condition for the validity of \eqref{eq:suff_cond_sph}, 
for every $\alpha$ in the range, is that $k$ satisfies
\[
\left(\frac{k+1}{2}\right)^{\frac{2}{k-1}}   < \frac{ e^{-1/e}}{4}\cdot \frac{(k-1)^2}{k},
\]
and one can easily check that this holds as long as $k\ge 11$. An easy improvement, for $k\le 10$, 
is to use the estimate $T_2<k^{1-1/11}=k^{10/11}$, getting 
\[
\left(\frac{k+1}{2}\right)^{\frac{2}{k-1}}   < \frac{ e^{-1/e}}{4}\cdot \frac{(k-1)^2}{k^{10/11}},
\]
which extends the validity to the values $k=9,10$, again for every $\alpha$. As we mentioned, 
with more precise estimates it is possible also to include the range $6 \le k \le 8$: one 
just needs to improve the estimate of both $T_3$ and mainly of $T_2$, as we just did and also 
taking into account the factor $\frac{4+\alpha }{4+5\alpha+2\alpha^2}$ which is much smaller 
than $1$ when $k$ is small.

Moreover, by continuity, in case $\alpha=\frac{4}{k+1}$ the condition \eqref{eq:suff_cond_sph} reduces to
\begin{equation}\label{eq:suff_cond_sph_mass_crit}
\left(\frac{k+1}{2}\right)^\frac{2}{k+1} \left[\frac{k(k+1)(k+2)}{14+7k+k^2} \right]^{\frac{k}{k+1}}
<
\frac{(k-1)(k+1)}{4},
\end{equation}
which holds true whenever $k\ge3$, as one can check directly (by the previous discussion, it is enough
to check it for $3\le k\le 5$, or $3\le k \le 8$). In particular, we obtain that 
$\thresh{tr}<\thresh{ex}$ whenever $k\ge3$ and $\alpha$ is slightly mass supercritical.

Finally, to complete the proof of Proposition \ref{prop:sphere}, we are left to check the 
cases $k=4,5$, for every $\alpha$ in the range. To this purpose, Remark 
\ref{rem:criterionwithvolume} is not enough, and one has to resort to Remark 
\ref{rem:improvedcriterionwithvolume}, checking the validity of the condition therein. Since 
we are left with two cases for $k$, the exponent $\alpha$ ranges in a bounded interval, and the functions 
involved in Remark \ref{rem:improvedcriterionwithvolume} are smooth in the considered range, 
such check can be performed with the help of a Computer Algebra System. As a matter of fact, the 
check is positive when $k=4,5$, while it fails for $k=3$ and $\alpha$ large, and for $k=2$ (any $\alpha$).

This concludes the proof of Proposition \ref{prop:sphere}.

\bigskip

\textbf{Acknowledgments.} Work partially supported by: PRIN-20227HX33Z ``Pattern formation in nonlinear 
phenomena'' - funded  by the European Union-Next Generation EU, Miss. 4-Comp. 1-CUP D53D23005690006; 
the Portuguese government through FCT/Portugal under the project PTDC/MAT-PUR/1788/2020; 
the MUR grant Dipartimento di Eccellenza 2023-2027; 
the INdAM-GNAMPA group.

\bigskip

\textbf{Data Availability.} Data sharing not applicable to this article as no datasets were generated or analyzed during the current study.

\bigskip

\textbf{Disclosure statement.} The authors report there are no competing interests to declare.

\bibliography{normalized}{}
\bibliographystyle{abbrv}
\medskip
\small

\begin{flushright}
{\tt dario.pierotti@polimi.it}\\
{\tt gianmaria.verzini@polimi.it}\\
{\tt junwei.yu@polimi.it}\\
Dipartimento di Matematica, Politecnico di Milano\\
piazza Leonardo da Vinci 32, 20133 Milano, Italy.
\end{flushright}

\end{document}